\newtheorem{theorem}{Theorem}[section]
\newtheorem{definition}[theorem]{Definition}
\newtheorem{proposition}[theorem]{Proposition}
\newtheorem{corollary}[theorem]{Corollary}
\newtheorem{lemma}[theorem]{Lemma}
\numberwithin{equation}{section}
\newcommand{\A}{\mathcal{A}}
\newcommand{\F}{\mathcal{F}}
\newcommand{\vv}{\mathbf{v}}
\newcommand{\ww}{\mathbf{w}}
\newcommand{\B}{\mathcal{B}}
\newcommand{\T}{\mathrm{T}}
\newcommand{\s}{\mathrm{S}}
\newcommand{\HH}{\mathbb{D}}
\newcommand{\Z}{\mathbb{Z}}
\newcommand{\N}{\mathbb{N}}
\begin{document}
\title[(Co)Homology of cyclic monoids]{Computability of the (co)homology of cyclic monoids}
\author{M. Calvo-Cervera}
\author{A.M. Cegarra}
\address{Departamento de \'Algebra,  Universidad de
Granada, 18071 Granada, Spain}
\email{acegarra@ugr.es}
\email{mariacc@ugr.es}
\thanks{This work has been supported by DGI
of Spain, Project MTM2011-22554. Also, the first author by FPU
grant FPU12-01112.}

 \subjclass[2000]{20M50}
 \keywords{monoid, homology, cohomology, free resolution.}

\begin{abstract}
Leech's (co)homology groups of finite cyclic monoids are computed.
\end{abstract}

\maketitle

\section{Introduction} The so-called Leech (co)homology groups of a monoid $M$ are defined to be those of its category of factorizations $\HH \!M$, whose object set is $M$ and arrow set is $M\times M\times M$ (see Section \ref{prel} for details).  Therefore, if $\A:\HH M\to \mathbf{Ab}$ is any left $\HH M$-module, that is, any abelian group valued functor on $\HH M$, the cohomology groups of $M$ with coefficients in $\A$ \cite[Chapter II, \S 2.1]{leech} are defined by
\begin{equation}\label{chn}
\mathrm{H}^n(M,\A)=\mathrm{Ext}_{\HH \!M}^n(\Z,\A)= R^n\mathrm{Hom}_{\HH M}(\Z,-)(\A)=
R^n\mathrm{Hom}_{\HH M}(-,\A)(\Z), \hspace{0.4cm}(n\geq 0),
\end{equation}
where, for any two left $\HH M$-modules $\A$ and $\A'$,  $\mathrm{Hom}_{\HH M}(\A,\A')$ denotes  the abelian group of morphisms of $\HH M$-modules between them, and $\Z:\HH M\to \mathbf{Ab}$ is the constant functor defined by the abelian group of integers $\Z$. Similarly, for $\B:\HH M^{op}\to \mathbf{Ab}$ any right $\HH M$-module, the homology groups of $M$ with coefficients in  $\B$ \cite[Definition 2.1]{Khusi} are defined by
\begin{equation}\label{hn}
\mathrm{H}_n(M,\B)=\mathrm{Tor}^{\HH M}_n(\B,\Z)= L_n(-\otimes_{\HH M}\Z)(\B)=L_n(\B\otimes_{\HH M}-)(\Z), \hspace{0.4cm}(n\geq 0),
\end{equation}
where, for any left $\HH M$-module $\A$, the tensor product $\B\otimes_{\HH M}\A$ is the abelian group defined as the coend of the bifunctor $\HH M^{op}\times \HH M\to \mathbf{Ab}$ which carries each pair $(x,y)\in M\times M$ to the tensor product abelian group $\B(x)\otimes \A(y)$.

It is remarkable that the category of left $\HH M$-modules is equivalent to the category of internal abelian group objects in the comma category of monoids over $M$ and,   with a dimension shift, the Barr-Beck cotriple (co)homology groups \cite{Barr-Beck} and the Leech (co)homology groups of $M$ are naturally isomorphic \cite[Theorem 8]{wells}. Furthermore, when coefficients are taken in ordinary $M$-modules (regarded as constant on objects $\HH M$-modules), Leech (co)-homology groups agree with those by Eilenberg and Mac Lane \cite[Chapter X, \S 5]{maclane}, see Subsection \ref{cohoM-M} below for some details. In particular, Eilenberg-Mac Lane (co)homology groups of groups are instances of Leech (co)homology groups of monoids.

This paper deals with the (co)homology of finite cyclic monoids $C_{m,q}$, whose structure and classification by means of the index $m$ and the period $q$ was first stated by Frobenius \cite{Frobenius}. Although the (co)homology groups of any finite cyclic group $C_q=C_{0,q}$ have been well-known since they were computed in 1949 by Eilenberg \cite[\S 11]{Eilenberg}, this is not the case for finite cyclic monoids of index $m\geq 1$. Indeed, to our knowledge, the cohomology groups $\mathrm{H}^n(C_{m,q},\A)$ of a cyclic monoid of index $m\geq 1$  have been computed  only for $n\leq 2$ by Leech in \cite[Chapter II, 7.20, 7.21]{leech}. However, because higher cohomology groups are interesting (see \cite{cch}, for instance), the aim of this paper is to compute all the (co)homology groups of any finite cyclic monoid $C_{m,q}$.

Briefly, the contents of the paper are as follows.  In Section 2, while fixing notation and terminology, we review some basic constructions concerning the (co)homology of monoids. Section 3 is mainly dedicated to studying the {\em trace maps} associated to any $\HH C_{m,q}$-module, which are a key tool in our deliberations. Section 4 is devoted to the construction of a specific free resolution of the trivial $\HH C_{m,q}$-module $\Z$, which allows us to determine, in the final Section 5, the groups $\mathrm{H}^n(C_{m,q},\A)$ and $\mathrm{H}_n(C_{m,q},\B)$. The (co)homology of $C_{m,q}$ is proven to be periodic  with a period of $2p/\gcd(e,p)$.

\section{Notations and preliminaries.}\label{prel}

Throughout,  monoids $M$ (although not necessarily commutative) are written additively.

\subsection{The category $\HH M$.} As noted in the introduction,  the (co)homology groups of a monoid $M$ are defined to be those of a small category, denoted by $\HH M$, canonically  associated to the monoid. This is the category whose set of objects is $M$ and set of arrows $M\times M\times M$, where $(x,y,z):y\to x+y+z$. Composition is given by $$(u,x+y+z,v)(x,y,z)=(u+x,y,z+v),$$ and the identity morphism of any object $x$ is $(0,x,0):x\to x$.

\subsection{Left $\HH M$-modules.} A left $\HH M$-module is an  abelian group valued functor on the category $\HH M$. If, for any left $\HH M$-module $\A:\HH M\to \mathbf{Ab}$,  we write $$\A(x,y,z)=x_*z^*:\A(y)\to\A(x+y+z),$$ then we see that $\A$ consists of abelian groups $\A(x)$, one for each $x\in M$, and homomorphisms
$$ \A(y)\overset{x_*}\longrightarrow \A(x+y)\overset{y^*}\longleftarrow \A(x),$$
 for each $x,y\in M$, such that the equations below hold.
$$\begin{array}{cc}x_*y_*=(x+y)_*: \A(z)\to \A(x+y+z),& y^*x^*=(x+y)^*: \A(z)\to \A(z+x+y),\\
0_*=0^*=id_{\A(x)}:\A(x)\to \A(x),&
x_*y^*=y^*x_*:\A(z)\to \A(x+z+y).
\end{array}$$

For instance, let $\Z:\HH M\to \mathbf{Ab}$ be the  $\HH M$-module
 that associates to each element $x\in M$ the free abelian
group on the generator $(x)$, $\Z(x)$, and to each  $x,y\in M$ the isomorphisms of abelian groups
$$ \Z(y)\overset{x_*}\longrightarrow \Z(x+y)\overset{y^*}\longleftarrow \Z(x)$$
given on generators by $x_*(y)=(x+y)=y^*(x)$. This is
isomorphic to the  $\HH M$-module defined by the constant functor
on $\HH M$ which associates the abelian group $\Z$
to any $x\in M$.

For two left $\HH M$-modules $\A$ and $\A'$, a morphism between them (i.e., a natural transformation) $f:\A\to \B$ consists of homomorphisms $f_x:\A(x)\to \A'(x)$, such that, for any $x,y\in M$, the squares below commute.
$$
\xymatrix{\A(y)\ar[r]^-{x_*}\ar[d]_{f_y}&\A(x+y)\ar[d]^{f_{x+y}}&\A(x)\ar[l]_-{y^*}
\ar[d]^{f_x}\\ \A'(x)\ar[r]^-{x_*}&\A'(x+y)&\A'(x)\ar[l]_-{y^*}}
$$

The category of left $\HH M$-modules, denoted by $\HH M$-Mod, is an abelian category with enough projective and injective objects. We refer to \cite[Chapter  I, \S 1]{leech} for details, but recall that the set of morphisms between two  $\HH M$-modules $\A$ and $\A'$, denoted by $\mathrm{Hom}_{\HH M}(\A,\A')$, is an abelian group by pointwise addition, that is, if $f,g:\A\to \A'$ are morphisms, then  $f+g:\A\to \A'$ is defined by setting $(f+g)_x=f_x+g_x$, for each $x\in M$. The zero  $\HH M$-module is the constant functor
$0:\HH M\to \mathbf{Ab}$ defined by the trivial abelian group $0$, and a sequence of $\HH M$-modules $\A\to \A'\to \A''$ is exact if and only if the induced sequences of abelian groups $\A(x)\to \A'(x)\to \A''(x)$ are exact, for all $x\in M$.

\subsection{Free left $\HH M$-modules.} Let $\mathbf{Set}\!\!\downarrow\!\!_M$ be
the comma category of sets over
 the underlying set of $M$; that is, the category whose objects
 $S=(S,\pi)$ are sets $S$ endowed with a map $\pi:S\to M$ and
 whose morphisms are maps $\varphi:S\to T$ such that $\pi\varphi=\pi$. There is a {\em forgetful functor}
 $U:\HH M\text{-Mod}
 \to \mathbf{Set}\!\!\downarrow\!\!_M$,
 which carries any $\HH M$-module $\A$ to the disjoint union set
 $$U\A=\coprod_{x\in M}\A(x)=\{(x,a)\mid \,  x\in M,\, a\in \A(x)\},$$
endowed with the projection map $\pi:U\A\to M$, $\pi(x,a)=x$. A morphism  $f:\A\to \A'$ is sent to the map $Uf:U\A\to U\A'$ given by $Uf(x,a)=(x,f_x(a))$.
 There is also   a {\em free left $\HH M$-module}
functor
$ F:\mathbf{Set}\!\!\downarrow\!\!_M \to
\HH M\text{-Mod}$,
 which is defined as follows:
If $S=(S,\pi)$ is any set over $M$, then $ F S$ is the $\HH M$-module such that, for each  $x\in M$,
$$ F S(x)=\Z\{(u,s,v)\in M\times S\times M \mid u+\pi s+v=x\}
$$
is the free abelian group with generators all triples $(u,s,v)$, where
$u,v\in M$ and $s\in S$,  such that $u+\pi s+v=x$.
We usually write $(0,s,0)$ simply by $s$, so that each
element of $s\in S$ is regarded as an element $s\in  F S(\pi s)$.
For any
$x,y\in M$,  the homomorphisms
$$  F S(y)\overset{x_*}\longrightarrow  F S(x+y)\overset{y^*}\longleftarrow  F S(x)$$
are defined on generators by $x_*(u,s,v)=(x+u,s,v)$ and $y^*(u,s,v)=(x,s,v+y)$.

If $\varphi:S\to T$ is any map of sets over $M$, the induced
morphism  $ F \varphi: F S\to  F S'$ is given, at
each $x\in M$, by the homomorphism such that $( F\varphi)_x(u,s,v)=(u,\varphi(s),v)$.

\begin{proposition}\label{adfu} The functor $ F$  is left adjoint to the functor
 $U$. Thus, for $S=(S,\pi)$ any set over $M$ and any left $\HH M$-module $\A$, there is a natural isomorphism of abelian groups
$$\xymatrix{\mathrm{Hom}_{\HH M}( F S,\A) \ \cong \ \prod\limits_{s\in S}\A(\pi s).}$$
\end{proposition}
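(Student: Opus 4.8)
The plan is to exhibit the adjunction isomorphism explicitly and check naturality, rather than invoking any abstract adjoint functor theorem. Given a set $S=(S,\pi)$ over $M$ and a left $\HH M$-module $\A$, I would define a map
$$
\Phi:\mathrm{Hom}_{\HH M}(FS,\A)\longrightarrow \prod_{s\in S}\A(\pi s),\qquad
\Phi(f)=\big(f_{\pi s}(s)\big)_{s\in S},
$$
using the convention $s=(0,s,0)\in FS(\pi s)$ from the excerpt. The inverse sends a family $(a_s)_{s\in S}$ with $a_s\in\A(\pi s)$ to the morphism $f$ whose component at $x\in M$ is the homomorphism $FS(x)\to\A(x)$ determined on the free generators by $(u,s,v)\mapsto u_*v^*(a_s)$; this is well-defined because $FS(x)$ is free abelian on those generators and $u_*v^*(a_s)\in\A(u+\pi s+v)=\A(x)$.

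The three things to verify are: (i) for a family $(a_s)$, the prescribed $f$ really is a morphism of $\HH M$-modules, i.e. the squares with $x_*$ and $y^*$ commute — this reduces to the identities $x_*(u_*v^*)=(x+u)_*v^*$ and $y^*(u_*v^*)=u_*(v+y)^*$, which follow from the module axioms $x_*u_*=(x+u)_*$, $y^*v^*=(v+y)^*$ and $u_*$ commuting with $v^*$; (ii) $\Phi$ and the proposed inverse are mutually inverse — one direction is immediate since $\Phi(f)_s=f_{\pi s}(0,s,0)=f_{\pi s}(s)$, and the other uses that any morphism $f$ out of $FS$ is forced on generators by $f_x(u,s,v)=f_x\big(u_*v^*(0,s,0)\big)=u_*v^*\big(f_{\pi s}(s)\big)$, i.e. $f$ is determined by its values on $S$; (iii) naturality in both $\A$ and $S$, which is a routine chase of the defining formulas. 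Step (ii), showing $f_x(u,s,v)=u_*v^*(f_{\pi s}(s))$, is really the crux: it is exactly the statement that in $FS$ one has $(u,s,v)=u_*v^*(s)$, which follows by applying the generator formulas $u_*(0,s,0)=(u,s,0)$ and then $v^*(u,s,0)=(u,s,v)$.

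The main obstacle, such as it is, will be bookkeeping: one must be careful that all the composites land in the correct groups $\A(x)$ (tracking the index $x=u+\pi s+v$ throughout) and that the commuting-square conditions are checked for the left-action maps $x_*$ and the right-action maps $y^*$ separately, and then for their interaction. No genuinely hard idea is needed — the proposition is the standard "free object = basis" adjunction, and once the correspondence $f\leftrightarrow (f_{\pi s}(s))_s$ is written down, everything is a direct consequence of the module axioms listed in the subsection on left $\HH M$-modules together with freeness of the abelian groups $FS(x)$. I would close by remarking that the isomorphism is visibly one of abelian groups, since $\Phi$ is additive ($\Phi(f+g)_s=(f+g)_{\pi s}(s)=f_{\pi s}(s)+g_{\pi s}(s)$), and that, as a formal consequence of being a left adjoint, $F$ preserves colimits and sends (split) epimorphisms in $\mathbf{Set}\!\!\downarrow\!\!_M$ to epimorphisms, so the modules $FS$ are projective — the fact that will be used in the later sections.
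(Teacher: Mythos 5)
Your proposal is correct and follows essentially the same route as the paper: the paper likewise exhibits the correspondence explicitly, via the unit $s\mapsto(\pi s,s)$ of the adjunction and the formula $f_x(u,s,v)=u_*v^*\varphi(s)$ forced on generators, and then identifies maps over $M$ from $S$ to $U\A$ with families in $\prod_{s\in S}\A(\pi s)$. The only differences are presentational (you phrase it as a direct bijection rather than through the unit, and you spell out the verification steps the paper leaves implicit).
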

\begin{proof} At any set $S$ over $M$, the unit of the
adjunction is the map
$$\epsilon:S\to U F S=
\{(x,a)\mid x\in M,\, a\in  F S(x)\},
\hspace{0.5cm} s\mapsto (\pi s, s).
$$
If $\A$ is a $\HH M$-module and $\varphi:S\to U\A$ is any map over
$M$, then the unique morphism of $\HH M$-modules $f: F S\to \A$
such that $( U f)\,\epsilon =\varphi$ is determined by the equations
$$f_x(u,s,v)=u_*v^*\varphi(s),$$ for any $x\in M$ and  $(u,s,v)\in M\times S\times M$ with $u+\pi s+v=x$. Since giving a map over $M$, $\varphi:S\to U\A$, is the same thing as giving a list
$(\varphi(s))_{s\in S}\in \prod_{s\in S}\A(\pi s)$, the isomorphism $\mathrm{Hom}_{\HH M}( F S,\A)  \cong  \prod_{s\in S}\A(\pi s)$ follows.
\end{proof}

\subsection{Right $\HH M$-modules.}
The category  of {\em right $\HH M$-modules} is defined to be the category of functors $\B:{\HH M}^{op}\to \mathbf{Ab}$. A right ${\HH M}$-module $\B$ provides us with abelian groups $\B(x)$, $x\in M$,  and homomorphisms
$$ \B(y)\overset{x^*}\longleftarrow \B(x+y)\overset{y_*}\longrightarrow \B(x),$$
 for each $x,y\in M$, such that the equations below hold.
$$\begin{array}{cc}y^*x^*=(x+y)^*: \B(x+y+z)\to \B(z),& x_*y_*=(x+y)_*: \B(z+x+y)\to \B(z),\\
0_*=0^*=id_{\B(x)}:\B(x)\to \B(x),&
x^*y_*=y_*x^*: \B(x+z+y)\to\B(z).
\end{array}$$

\subsection{Tensor product of $\HH M$-modules.}
If $\B$ is a right $\HH M$-module and  $\A$ is any left $\HH M$-module,  their  tensor product
$\xymatrix{\B\otimes_{\HH M}\A =  \overset{_{\HH M}}\int \B\otimes\A}$
is the abelian group coend \cite[Chapter IX, \S 6]{maclane2} of the functor $\B\otimes\A:\HH M^{op}\times \HH M\to \mathbf{Ab}$ defined by $(\B\otimes\A)(x,y)= \B(x)\otimes \A(y)$.

That is,
$\B\otimes_{\HH M}\A$ is the abelian group generated by elements of the form $b\otimes a$, where $b\in \B(x)$ and $a\in \A(x)$, $ x\in M$, subject to the relations
$$
\begin{array}{cll}
(b+b')\otimes a= b\otimes a +b'\otimes a, &\text{ for} &b,b'\in \B(x),\,a\in \A(x), x\in M,\\
b\otimes (a+a')=b\otimes a+b\otimes a', &\text{ for} &b\in \B(x),\,a,a'\in \A(x), x\in M,\\
 y_*b\otimes a=b\otimes y^*a, &\text{ for}&b\in \B(x+y),\,a\in \A(x),\, x,y\in M,\\
  y^*b\otimes a=b\otimes y_*a, &\text{ for}&b\in \B(y+x),\,a\in \A(x),\, x,y\in M.
\end{array}
$$
\begin{proposition}\label{pwf} For $S=(S,\pi)$ any set over $M$ and $\B$ any right
 $\HH M$-module, there is a natural isomorphism of abelian groups
$$\xymatrix{\B\otimes_{\HH M} F S \ \cong \ \bigoplus\limits_{s\in S}\B(\pi s).}$$
\end{proposition}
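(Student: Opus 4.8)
The plan is to mirror the adjointness argument of Proposition \ref{adfu} on the tensor-product side, using the fact that $-\otimes_{\HH M}\A$ is the left adjoint of the functor $\mathrm{Hom}_{\mathbf{Ab}}(\A,-)$ composed with an appropriate evaluation, or — more directly — to exhibit explicit mutually inverse homomorphisms and check the defining relations. First I would define a map $\Phi:\bigoplus_{s\in S}\B(\pi s)\to \B\otimes_{\HH M}\!FS$ by sending, for each $s\in S$, an element $b\in\B(\pi s)$ to $b\otimes s$, where $s=(0,s,0)\in FS(\pi s)$ as in the convention fixed in the excerpt; since the $\B(\pi s)$ sit inside a direct sum, $\Phi$ is determined on each summand and is clearly additive.

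Next I would construct the candidate inverse $\Psi:\B\otimes_{\HH M}\!FS\to\bigoplus_{s\in S}\B(\pi s)$. A generator of $\B\otimes_{\HH M}\!FS$ has the form $b\otimes(u,s,v)$ with $b\in\B(x)$, $x=u+\pi s+v$, and one uses the third and fourth relations of the tensor product to rewrite
$$
b\otimes(u,s,v)=b\otimes u_*v^*s=v^*u^*b\otimes s,
$$
where $u^*b\in\B(v+\pi s)$ and then $v^*u^*b\in\B(\pi s)$. So I would \emph{define} $\Psi(b\otimes(u,s,v))$ to be $v^*u^*b$, placed in the $s$-summand. The essential verification is that $\Psi$ respects the four families of defining relations of the coend: additivity in $b$ and in the $FS$-variable is immediate; the relations $y_*b\otimes a=b\otimes y^*a$ and $y^*b\otimes a=b\otimes y_*a$ translate, after expanding $a=(u,s,v)$ and using $y^*a=(u,s,v+y)$, $y_*a=(y+u,s,v)$ together with the right-$\HH M$-module identities $y^*x^*=(x+y)^*$, $x_*y_*=(x+y)_*$ and the commutation $x^*y_*=y_*x^*$, into the required equalities of the two expressions for $\Psi$. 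This is the step I expect to be the main (though still routine) obstacle, since it is where the module axioms must be used with care about the order of operations.

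Finally I would check $\Psi\Phi=\mathrm{id}$ and $\Phi\Psi=\mathrm{id}$. The first is trivial: $\Psi(b\otimes s)=0^*0^*b=b$. For the second, on a generator $b\otimes(u,s,v)$ we have $\Phi\Psi(b\otimes(u,s,v))=(v^*u^*b)\otimes s$, and applying the two tensor relations in the reverse direction ($(v^*u^*b)\otimes s=u^*b\otimes v_*s=b\otimes u_*v_*s\,$, reading the relations as $w^*c\otimes a=c\otimes w_*a$ and $w_*c\otimes a=c\otimes w^*a$ suitably) recovers $b\otimes(u,s,v)$; since these generators span, $\Phi\Psi=\mathrm{id}$. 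Naturality in $S$ is checked by noting that for a map $\varphi:S\to T$ over $M$, both $F\varphi$ and the evident map $\bigoplus_s\B(\pi s)\to\bigoplus_t\B(\pi t)$ are given summand-wise by the identity on $\B(\pi s)=\B(\pi\varphi(s))$, so the square commutes on the generators $b\otimes s$. (Alternatively one may deduce the isomorphism abstractly: $-\otimes_{\HH M}\!FS$ and $\bigoplus_{s\in S}(-)(\pi s)$ are both right exact and preserve coproducts, and they agree on representable right modules, but the explicit maps above make the statement self-contained.)
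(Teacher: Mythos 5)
Your overall strategy coincides with the paper's: reduce every generator $b\otimes(u,s,v)$ to one of the form $b'\otimes s$ and define mutually inverse maps $\Phi$ and $\Psi$ accordingly. However, the formula you give for $\Psi$ is wrong, and the error is not merely notational. Writing $(u,s,v)=u_*v^*s$ and moving the operators across the tensor sign, the relation $y^*b\otimes a=b\otimes y_*a$ gives $b\otimes u_*(v^*s)=u^*b\otimes v^*s$ with $u^*b\in\B(\pi s+v)$ --- not $\B(v+\pi s)$ as you claim, since by the paper's conventions $u^*$ deletes $u$ from the \emph{left} of the index --- and the relation $y_*b\otimes a=b\otimes y^*a$ then gives $u^*b\otimes v^*s=v_*(u^*b)\otimes s$. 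So the correct value is $\Psi(b\otimes(u,s,v))=v_*u^*b=u^*v_*b$, as in the paper; your $v^*u^*b=(u+v)^*b$ is not even defined for non-commutative $M$ (it would require $b\in\B(u+v+\pi s)$ rather than $\B(u+\pi s+v)$), and for commutative $M$ it is still the wrong element: already for $u=0$ one has $b\otimes(0,s,v)=v_*b\otimes s$, so $\Psi$ must return $v_*b$, whereas your formula returns $v^*b$, and these differ whenever $v^*\ne v_*$ on $\B(\pi s+v)$. Consequently the step you yourself single out as the essential one --- checking that $\Psi$ respects the coend relations --- would fail for your $\Psi$. The same misplacement of stars recurs in your verification of $\Phi\Psi=\mathrm{id}$, where $u_*v_*s=(u+v,s,0)\ne(u,s,v)$.

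Once $\Psi$ is corrected to $b\otimes(u,s,v)\mapsto u^*v_*b$, the remainder of your plan (well-definedness via the module axioms, the two composite identities, naturality in $S$) goes through and reproduces the paper's argument, which simply records the reduction $b\otimes(u,s,v)=u^*v_*b\otimes s$ together with the two inverse assignments.
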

\begin{proof}
As an abelian group, $\B\otimes_{\HH M} F S$ is generated by the elements $$b\otimes (u,s,v)=b\otimes u_*v^*(0, s,0)=b\otimes u_*v^*s=u^*v_* b\otimes s,$$ with $u,v\in M$, $s\in S$, and $b\in \B(u+\pi s+v)$. The claimed isomorphism carries such a generator $b\otimes (u,s,v)$ to the element $u^*v_*b\in \B(\pi s)$. Its inverse map carries any element $b\in \B(\pi s)$ to the generator $b\otimes s$ of $\B\otimes_{\HH M} F S$.
\end{proof}

\subsection{Computing the (co)homology of a monoid.} From Proposition \ref{adfu},  it easily follows that every free left $\HH M$-module is projective. Then,  if
$$
\F_\bullet\overset{\epsilon}\to \Z: \hspace{0.5cm}\cdots \to \F _2\overset{\partial}\to \F _1\overset{\partial}\to \F _0\overset{\epsilon}\to \Z\to 0
$$
is any free resolution of $\Z$ in the category of left $\HH M$-modules, then the cohomology groups of $M$  with coefficients in a left $\HH M$-module $\A$, defined in \eqref{chn}, can be computed by means of the induced cochain complex of abelian groups
$$\mathrm{Hom}_{\HH M}(\F _\bullet,\A):  0\to \mathrm{Hom}_{\HH M}(\F _0,\A)\overset{\partial^*}
\to \mathrm{Hom}_{\HH M}(\F _1,\A) \overset{\partial^*}\to\mathrm{Hom}_{\HH M}(\F _2,\A)  \to \cdots $$ by
$
\mathrm{H}^n(M,\A)=\mathrm{H}^n\big(\mathrm{Hom}_{\HH M}(\F _\bullet,\A)\big),
$
and the homology groups of $M$  with coefficients in a right $\HH M$-module $\B$, defined in \eqref{hn}, by means of the induced chain complex
$$ \B\otimes_{\HH M}\F _\bullet:\  \cdots \to \B\otimes_{\HH M}\F _2\overset{id\otimes \partial}\longrightarrow \B\otimes_{\HH M}\F _1\overset{id\otimes \partial}\longrightarrow \B\otimes_{\HH M}\F _0\to 0$$ as
$
\mathrm{H}^n(M,\B)=\mathrm{H}_n\big(\B\otimes_{\HH M}\F _\bullet\big)
$.

\subsection{Eilenberg-Mac Lane (co)homology.}
\label{cohoM-M}  There is a full exact embedding from the category of ordinary left $M$-modules into the category of left $\HH M$-modules. This carries any left $M$-module $A$, with $M$-action $(x,a)\mapsto x a$, to the left $\HH M$-module, also denoted by $A$,  defined by $A(x)=A$, for all $x\in M$, together with the homomorphisms $x_*, x^*:A\to A$  given by $x_*a=x a$ and $x^*a=a$ \cite[Chapter III, Lemma 1.9]{leech}. Similarly, there is a full exact embedding from the category of ordinary right $M$-modules into the category of right $\HH M$-modules, which carries any right $M$-module $B$ to the right $\HH M$-module, also denoted by $B$,  defined by $B(x)=B$, for all $x\in M$, together the homomorphisms $x_*, x^*:B\to B$  given by $x^*b=b x$ and $x_*b=b$.

When, for $A$ any left $M$-module and $B$ any right $M$-module,  one applies the functors $\mathrm{Hom}_{\HH M}(-,A)$ and  $B\otimes_{\HH M}-$ to the standard free resolution of the left $\HH M$-module $\Z$ in \cite[Chaper II, 2.2]{leech}, then one obtains a cochain complex isomorphic to $\mathrm{Hom}_{\Z\!M}(\mathrm{B}(M),A)$ and a chain complex isomorphic to $B\otimes_{\Z M}\mathrm{B}(M)$, respectively. Here, $\Z M$ is the monoid ring and $\mathrm{B}(M)$ the bar resolution of $\Z$ as a left $M$-module. It follows that
$$\begin{array}{l}\mathrm{H}^n(M,A)=\mathrm{Ext}_{\Z M}^n(\Z,A),\hspace{0.5cm}
\mathrm{H}_n(M,B)=\mathrm{Tor}^{\Z M}_n(B,\Z).
\end{array}$$
That is, the Leech (co)homology groups $\mathrm{H}^n(M,A)$ and $\mathrm{H}_n(M,B)$ agree with those by Eilenberg and Mac Lane \cite[Chapter X, \S 5]{maclane} (see the proof of  \cite[Chapter III, Corollary 1.15]{leech} for more details). In particular, Eilenberg-Mac Lane (co)homology groups of groups are instances of Leech (co)homology groups of monoids \footnote{If $G$ is a group, regarded as a category with only one object, then $G$ and $\HH\!G$ become equivalent categories due to the functor $F:G\to \HH\!G$ given by $F(x)=(x,0,-x):0\to 0$. Consequently, the categories of  $G$-modules and of $\HH\!G$-modules are equivalent. This gives an alternative and easier proof that, for groups, both the Leech and the Eilenberg-Maclane (co)homology theories are equivalent.}.

\section{Cyclic monoids and trace maps.}

The structure of finite cyclic monoids was first stated by Frobenius \cite{Frobenius}. Briefly, let us recall that, if $\sim$ is any non-equality congruence on the additive monoid  of natural numbers, $\N=\{0,1,\dots\}$, then the least $m\geq 0$ such that $m\sim x$ for some $x\neq m$ is called the {\em index} of the congruence, and the least $q\geq 1$ such that $m\sim m+q$ is called its {\em period}. Hence,
$$
x\sim y \text{\ \  if and only if either \ } x=y<m, \text{\ or\ } x,y\geq m \text{\ and\  } x\equiv y \!\!\! \mod{q}.
$$
The quotient $
\N/\!\!\sim
$
is called the {\em cyclic monoid of index $m$ and period $q$}, and is denoted here by $C_{m,q}$. As $\N$ is a free monoid on the generator 1, every finite cyclic monoid is isomorphic to a proper quotient of $\N$ and, therefore, to a monoid  $C_{m,q}$ for some $m\geq 0$ and $q\geq 1$.

Since every element of $C_{m,q}$ can be written uniquely in the form $[x]$ with $0\leq x<m+q$, the underlying set of this monoid can be described as the set
$$
C_{m,q}=\{0,1,\dots, m,m+1,\dots, m+q-1\}.
$$
{\em Hereafter, we use this description}. In these terms, the projection map $\wp:\N\to C_{m,q}$ is given by
$$
\wp( x) =\left\{\begin{array}{lll}x&\text{if}&  x<m+q\\[4pt]
x-kp&\text{if}& m+kp\leq x<m+(k+1)p,
\end{array}\right.
$$
and the addition in $C_{m,q}$, which is denoted by the symbol $\oplus$ to avoid confusion with the addition $+$ of $\N$, is given by
$$
x\oplus y =\wp(x+y).
$$
Furthermore, we use the notation
$r\cdot x$, for any $r\in \N$ and $x\in C_{m,q}$, to denote  the element of $C_{m,q}$ defined recursively by
\begin{equation}\label{not2}
0\cdot x=0,\hspace{0.4cm} (r+1)\cdot x= (r\cdot x)\oplus x.
\end{equation}
In other words, $r\cdot x=\overset{\text{\scriptsize{($r$-times})}}{x\oplus \cdots \oplus x}=\wp(\overset{\text{\scriptsize{($r$-times})}}{x+ \cdots + x})=\wp(rx)$. For instance, $2\cdot 8=7$ in $C_{2,9}$.

In what follows, we assume that $m+q\geq 2$, so that $C_{m,q}$ is not the zero monoid.

\vspace{0.2cm}
The following two families of homomorphisms are crucial for our deliberations.
\begin{definition}\label{defst} Let $\A$ be a left $\HH C_{m,q} $-module. For each $x\in C_{m,q}$, $x\geq 1$, the {\em `trace map'}
\begin{equation}\label{trace}
\T:\A(x)\longrightarrow\A(m\oplus (x-1))
\end{equation}
is the homomorphism defined  by
$$
\T(a)=\sum_{t=0}^{m+q-1}t^*(m+q-t-1)_*a-
    \sum_{s=0}^{m-1}s^*(m-s-1)_*a.
$$

Also, for each $x\in C_{m,q}$, let
\begin{equation}\label{s}
\s: \A(x)\to \A(x\oplus 1)
\end{equation}
be the homomorphism defined by
$\hspace{0.2cm}\s(a)=1_*a-1^*a$.
\end{definition}

The following subgroups will be used later.
\begin{equation}\label{not1}
\begin{array}{ll}
\A^\T(x)=\{a\in \A(x)\mid \T(a)=0\},&
\A_\T(x)=\{\T(a)\mid a\in \A(x)\},\\ [5pt]
\A^\s(x)=\{a\in \A(x)\mid \s(a)=0\},&
\A_S(x)=\{\s(a)\mid a\in \A(x)\}.
\end{array}
\end{equation}

\begin{lemma}\label{tsq} For any left $\HH C_{m,q} $-module $\A$, the squares below commute.
$$\xymatrix@C=20pt{
\A(x)\ar[d]_{1^*}\ar[r]^{1_*}&\A(x\oplus 1)\ar[d]^{\T}\\
\A(x\oplus 1)\ar[r]^{\T}&\A(m\oplus x)
} \ \xymatrix@C=20pt{
\A(x)\ar[d]_{1^*}\ar[r]^-{\T}&\A(m\oplus (x-1))\ar[d]^{1^*}\\
\A(x\oplus 1)\ar[r]^{\T}&\A(m\oplus x)
}\xymatrix@C=20pt{
\A(x)\ar[d]_{1_*}\ar[r]^-{\T}&\A(m\oplus (x-1))\ar[d]^{1_*}\\
\A(x\oplus 1)\ar[r]^{\T}&\A(m\oplus x)
}
$$
\end{lemma}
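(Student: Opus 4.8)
The plan is to verify each of the three squares by a direct computation with the defining formula for $\T$, drawing only on the module axioms, the commutativity of $C_{m,q}$, and the single wrap-around identity $\wp(m+q)=m$.

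First I would fix the shorthand $\T=P-Q$ on each $\A(x)$ with $x\ge1$, where $P=\sum_{t=0}^{m+q-1}t^*(m+q-1-t)_*$ and $Q=\sum_{s=0}^{m-1}s^*(m-1-s)_*$, and record a few elementary facts. For every $t\in C_{m,q}$ the operator $1^*$ commutes with $t^*$ (since $1^*t^*=(t\oplus1)^*=t^*1^*$ by the relation $y^*x^*=(x+y)^*$) and with $(m+q-1-t)_*$ (by $x_*y^*=y^*x_*$); likewise $1_*$ commutes with every $t^*$ and with every $u_*$, the latter because $1_*u_*=(1\oplus u)_*=(u\oplus1)_*=u_*1_*$. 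I also note the boundary evaluations $(m+q-1)\oplus1=\wp(m+q)=m$, $t\oplus1=t+1$ for $0\le t\le m+q-2$, and $(m-1-s)\oplus1=m-s$ for $0\le s\le m-1$. Finally a one-time check confirms that the objects at the corners of the three diagrams are the intended ones; in particular $m\oplus\bigl((x\oplus1)-1\bigr)=m\oplus x$ even in the wrap-around case $x=m+q-1$, where $x\oplus1=m$.

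With this in place the second and third squares are immediate: $1^*$ (resp.\ $1_*$) commutes termwise with each operator $t^*u_*$, hence with $P$ and with $Q$, hence with $\T=P-Q$, which is exactly the asserted commutativity of those two squares. The first square is the substantive case, because there $1_*$ and $1^*$ are genuinely different maps, so I would compute $\T(1_*a)$ and $\T(1^*a)$ separately. Pushing the leading $1_*$ or $1^*$ through the operators by the facts above and reindexing the sums, one obtains $P(1_*a)=m_*a+\sum_{t=1}^{m+q-1}t^*(m+q-t)_*a$ and $P(1^*a)=\sum_{t=1}^{m+q-1}t^*(m+q-t)_*a+m^*a$, so that $P(1_*a)-P(1^*a)=m_*a-m^*a$; similarly $Q(1_*a)=\sum_{s=0}^{m-1}s^*(m-s)_*a$ and $Q(1^*a)=\sum_{s=1}^{m}s^*(m-s)_*a$, so all but the two extreme terms cancel and $Q(1_*a)-Q(1^*a)=m_*a-m^*a$. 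Subtracting, $\T(1_*a)-\T(1^*a)=(m_*a-m^*a)-(m_*a-m^*a)=0$, which is the first square.

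I expect the only real difficulty to be bookkeeping: one must keep the two index ranges $\{0,\dots,m+q-1\}$ and $\{0,\dots,m-1\}$ strictly apart and track the isolated wrap-around terms produced by $\wp(m+q)=m$, since it is precisely the cancellation of those boundary contributions between the $P$-part and the $Q$-part of $\T$ that makes the first square close up. No input beyond the module axioms and the explicit description of $C_{m,q}$ is needed.
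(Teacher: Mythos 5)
Your proof is correct and follows essentially the same route as the paper's: the second and third squares are dispatched by noting that $1^*$ and $1_*$ commute termwise with every $t^*u_*$ (using commutativity of $C_{m,q}$), and the first square is verified by pushing $1_*$ and $1^*$ through the two defining sums of $\T$ and reindexing, the only differences being notational (the paper compares the two reindexed sums directly, whereas you organize the cancellation as $P(1_*a)-P(1^*a)=m_*a-m^*a=Q(1_*a)-Q(1^*a)$). Both computations hinge on exactly the same boundary identities $(m+q-1)\oplus1=m$ and $(m-1)\oplus1=m$, so there is nothing to add.
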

\begin{proof} To prove that $\T\, 1_*=\T\,1^*$, let $a\in \A(x)$. On one hand,
\begin{align*}T(1_*a)&= \sum_{t=0}^{m+q-1}t^*(m+q-t-1)_*1_*a-
    \sum_{s=0}^{m-1}s^*(m-s-1)_*1_*a
    \\
    &= \sum_{t=0}^{m+q-1}t^*\big((m+q-t-1)\oplus 1\big)_*a-
    \sum_{s=0}^{m-1}s^*((m-s-1)\oplus 1)_*a
    \\
&=m_*a+  \sum_{t=1}^{m+q-1}t^*(m+q-t)_*a
-m_*a
- \sum_{s=1}^{m-1}s^*(m-s)_*a\\ &=
\sum_{t=1}^{m+q-1}t^*(m+q-t)_*a-\sum_{s=1}^{m-1}s^*(m-s)_*a,
\end{align*}
and, on the other hand,
\begin{align*}\T(1^*a)&=
\sum_{t=0}^{m+q-1}1^*t^*(m+q-t-1)_*a-
    \sum_{s=0}^{m-1}1^*s^*(m-s-1)_*a\\
    &=\sum_{t=0}^{m+q-1}(1\oplus t)^*(m+q-t-1)_*a-
    \sum_{s=0}^{m-1}(1\oplus s)^*(m-s-1)_*a\\
    &=\sum_{t=0}^{m+q-2}(1+t)^*(m+q-t-1)_*a+m^*a
    -\sum_{s=0}^{m-2}(1+s)^*(m-s-1)_*a-m^*a\\ &=
    \sum_{t=0}^{m+q-2}(1+t)^*(m+q-t-1)_*a-
    \sum_{s=0}^{m-2}(1+s)^*(m-s-1)_*a,
\end{align*}
whence, by comparison, the result follows.

The other two equalities, $1^*\,\T=\T\, 1^*$ and $1_*\,\T=\T\, 1_*$, follow easily from the commutativity of the monoid $C_{m,q}$. \end{proof}

\begin{lemma}\label{stts} For any left $\HH C_{m,q} $-module $\A$, the sequences
$$
\A(x)\overset{\s}\longrightarrow \A(x\oplus 1)\overset{\T}\longrightarrow
\A(m\oplus x), \hspace{0.4cm}
\A(x)\overset{\T}\longrightarrow \A(m\oplus(x-1))\overset{\s}\longrightarrow
\A(m\oplus x),
$$
are semiexact, that is, $\T\,\s=0$ and $\s\,\T=0$.
\end{lemma}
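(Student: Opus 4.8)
The plan is to deduce both semiexactness statements directly from the three commuting squares established in Lemma \ref{tsq}, rather than reopening the explicit summation defining $\T$. The point is that, on every group where it is defined, $\s$ is by definition the difference $1_* - 1^*$, so any composite involving $\s$ expands into a difference of composites involving $1_*$ and $1^*$, and the latter are exactly what Lemma \ref{tsq} controls.

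Concretely, for $\T\,\s = 0$ I would take $a\in\A(x)$ and write $\T(\s a) = \T(1_* a) - \T(1^* a)$; the first square of Lemma \ref{tsq} says precisely $\T\,1_* = \T\,1^*$, so this is zero. For $\s\,\T = 0$ I would take $a\in\A(x)$ and write $\s(\T a) = 1_*(\T a) - 1^*(\T a)$, then use the third square of Lemma \ref{tsq} to rewrite $1_*\,\T$ as $\T\,1_*$ and the second square to rewrite $1^*\,\T$ as $\T\,1^*$, obtaining $\s(\T a) = \T(1_* a) - \T(1^* a)$, which vanishes once more by the first square.

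The only preliminary bookkeeping is to check that the objects occurring in the two sequences of the statement are indeed those appearing in Lemma \ref{tsq}: one needs $m\oplus\big((x\oplus1)-1\big) = m\oplus x$ so that $\T$ can follow $\s$ in the first sequence, and $\big(m\oplus(x-1)\big)\oplus1 = m\oplus x$ so that $\s$ can follow $\T$ in the second, and both are straightforward identities in $C_{m,q}$. I do not expect any genuine obstacle here, since the computational substance has already been absorbed into the proof of Lemma \ref{tsq}; the present lemma is essentially a corollary of it.
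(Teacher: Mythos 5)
Your proposal is correct and follows exactly the paper's own argument: the paper likewise writes $\T\,\s=\T\,1_*-\T\,1^*=0$ by the first square of Lemma \ref{tsq}, and $\s\,\T=1_*\T-1^*\T=\T\,1_*-\T\,1^*=0$ using the other two squares. The extra bookkeeping you mention about matching the target objects is harmless but not needed beyond what the lemma already records.
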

\begin{proof}
By Lemma \ref{tsq},   $\T\s=\T\,1_*-\T\,1^*=0$, and
$\s\T=1_*\,\T-1^*\,\T=\T\,1_*-\T\,1^*=0$.
\end{proof}
\section{A resolution of $\Z$ by free $\HH C_{m,q} $-modules.} It is possible to calculate the (co)homology of cyclic monoids efficiently by a clever choice of resolution. We construct here a specific free resolution of the trivial $\HH C_{m,q} $-module $\Z$, \begin{equation}\label{fres}
\F_\bullet\overset{\epsilon}\to \Z: \hspace{0.5cm}\cdots \to \F_2\overset{\partial}\to \F_1\overset{\partial}\to \F_0\overset{\epsilon}\to \Z\to 0,
\end{equation}
as follows.

For each integer $r\geq 0$, choose symbols $\vv_r$ and $\ww_r$. Then, recalling the notation \eqref{not2},

\begin{itemize}
\item[-] {\em  $\F_{2r}$ is the free $\HH C_{m,q} $-module on the unitary set over $C_{m,q}$, $\{\vv_r\}\overset{\pi}\to C_{m,q}$, where $\pi \vv_r =r\cdot m$.}

\vspace{0.2cm}
\item[-] {\em  $\F_{2r+1}$ is the free $\HH C_{m,q} $-module on the unitary set over $C_{m,q}$, $\{\ww_r\}\overset{\pi}\to C_{m,q}$, where $\pi \ww_r=r\cdot m\oplus 1$.}

\vspace{0.2cm}
\item[-] {\em  The augmentation $\epsilon:\F_0\to \Z$ is the morphism of $\HH C_{m,q} $-modules determined by $$\label{aug}\epsilon_0(\vv_0)=(0)\in \Z(0).$$
    }
    \end{itemize}
\begin{itemize}
\item[-] {\em For each $r\geq 0$, the differential $\partial:\F_{2r+2}\to \F_{2r+1}$ is the morphism of $\HH C_{m,q} $-modules determined by $$ \partial_{\pi\vv_{r+1}}(\vv_{r+1})=\T(\ww_{r}),$$
where $\T:\F_{2r+1}(r\cdot m\oplus 1)\to \F_{2r+1}((r+1)\cdot m)$ is the trace map \eqref{trace}.
}
\end{itemize}

    \begin{itemize}
\item[-] {\em For each $r\geq 0$, the differential $\partial:\F_{2r+1}\to \F_{2r}$ is the morphism of $\HH C_{m,q} $-modules determined by $$ \partial_{\pi\ww_r}(\ww_r)=\s(\vv_r),$$
where $\s:\F_{2r}(r\cdot m)\to \F_{2r}(r\cdot m\oplus 1)$ is the homomorphism \eqref{s}.
}
\end{itemize}

\begin{proposition} $\F_\bullet\overset{\epsilon}\to \Z$, defined  as above, is an augmented  complex of $\HH C_{m,q} $-modules.
\end{proposition}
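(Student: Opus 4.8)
The plan is to verify the two properties packed into the phrase \emph{augmented complex}: that $\epsilon$ and each $\partial$ are genuine morphisms of $\HH C_{m,q}$-modules, and that every pair of consecutive arrows composes to zero, i.e. $\epsilon\partial=0$ and $\partial\partial=0$. Well-definedness is the first thing I would dispose of. Each $\F_n$ is free on a one-element set over $C_{m,q}$, so by Proposition \ref{adfu} a morphism out of $\F_n$ exists and is uniquely determined as soon as the image of the single generator is prescribed inside the correct fibre. For $\epsilon$ there is nothing to check, since $(0)\in\Z(0)=\Z(\pi\vv_0)$; for $\partial\colon\F_{2r+1}\to\F_{2r}$ there is nothing either, since $\s$ maps $\F_{2r}(r\cdot m)$ into $\F_{2r}(r\cdot m\oplus 1)=\F_{2r}(\pi\ww_r)$.

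The one point that requires a remark is $\partial\colon\F_{2r+2}\to\F_{2r+1}$: the trace map carries $\F_{2r+1}(r\cdot m\oplus 1)$ into $\F_{2r+1}(m\oplus((r\cdot m\oplus 1)-1))$, so one must confirm the identity $m\oplus((r\cdot m\oplus 1)-1)=(r+1)\cdot m=\pi\vv_{r+1}$. This is a short computation inside $C_{m,q}$: when $m=0$ both sides equal $0$; when $m\ge 1$, both sides lie in the periodic block $\{m,\dots,m+q-1\}$ on which $x\mapsto x\oplus 1$ is a bijection, and applying $\oplus 1$ to each side yields $(r+1)\cdot m\oplus 1$ (using associativity of $\oplus$ together with $(z-1)\oplus 1=z$, valid for $1\le z\le m+q-1$), so the two sides coincide.

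For the vanishing of the composites, the structural observation I would lean on is that any morphism $f\colon\A\to\A'$ of $\HH C_{m,q}$-modules automatically commutes (fibrewise) with $\s$ and with $\T$, since both operators are built purely from the structure homomorphisms $x_*,x^*$, which $f$ respects by naturality. Granting this, and again that each $\F_n$ is free on a single generator, it suffices to evaluate on generators:
$$
\partial\partial(\vv_{r+1})=\partial(\T(\ww_r))=\T(\partial(\ww_r))=\T(\s(\vv_r)),\qquad
\partial\partial(\ww_r)=\partial(\s(\vv_r))=\s(\partial(\vv_r))=\s(\T(\ww_{r-1}))\ \ (r\ge 1),
$$
and $\epsilon\partial(\ww_0)=\epsilon(\s(\vv_0))=\s(\epsilon_0(\vv_0))=\s((0))$. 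The first expression is zero by Lemma \ref{stts} ($\T\,\s=0$, applied to $\A=\F_{2r}$ at $x=r\cdot m$), the second by Lemma \ref{stts} ($\s\,\T=0$, applied to $\A=\F_{2r-1}$ at $x=(r-1)\cdot m\oplus 1$, the fibre-matching again using the arithmetic identity above with $r$ replaced by $r-1$), and the last because on the constant module $\Z$ one has $1_*=1^*$, so $\s$ is the zero map there. Since $\F_n$ is free on one generator, vanishing on the generator gives $\epsilon\partial=0$ and $\partial\partial=0$, and hence $\F_\bullet\overset{\epsilon}\to\Z$ is an augmented complex.

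I do not anticipate a real obstacle: the substantive content, namely the semiexactness relations $\T\s=0$ and $\s\T=0$, is already isolated in Lemma \ref{stts}, and everything else is a formal consequence of freeness (Proposition \ref{adfu}) plus naturality. The only place demanding care is bookkeeping — keeping track of which fibre each $\vv_r$, $\ww_r$, and its $\T$- or $\s$-image lives in — which, as indicated, reduces entirely to the identity $m\oplus((r\cdot m\oplus 1)-1)=(r+1)\cdot m$.
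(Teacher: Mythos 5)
Your proof is correct and follows essentially the same route as the paper's: evaluate $\epsilon\partial$ and $\partial\partial$ on the single generator of each free module, push $\partial$ (resp.\ $\epsilon$) past $\s$ and $\T$ by naturality, and invoke Lemma \ref{stts} for $\T\s=0$ and $\s\T=0$ (the paper carries out the naturality step by expanding $\T$ as its defining sum, which is the same thing written out longhand). Your explicit verification of the fibre identity $m\oplus\bigl((r\cdot m\oplus 1)-1\bigr)=(r+1)\cdot m$ is a welcome extra precaution that the paper leaves implicit.
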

\begin{proof} The sequence $\F_1\overset{\partial}\to\F_0\overset{\epsilon}\to \Z$ is semiexact, that is, $\epsilon \partial=0$, since
$$
\epsilon_1\partial_1(\ww_0)=\epsilon_1(1_*\vv_0-1^*\vv_0)=
1_*\epsilon_0(\vv_0)-1^*\epsilon_0(\vv_0)=1_*(0)-1^*(0)=(1)-(1)=0.
$$

For any $r\geq 1$, the sequence $\F_{2r+1}\overset{\partial}\to\F_{2r}\overset{\partial}\to \F_{2r-1}$ is semiexact, since
\begin{align*}
\partial_{\pi\ww_r}\partial_{\pi\ww_r}(\ww_r)&=
\partial_{\pi\ww_r}(1_*\vv_r-1^*\vv_r)=1_*\partial_{\pi\vv_r}(\vv_r)
-1^*\partial_{\pi\vv_r}(\vv_r)\\
&=1_*\T(\vv_r)-1^*\T(\vv_r)=\s\T(\vv_r)=0.
\end{align*}

Finally, for any $r\geq 0$, the sequence $\F_{2r+2}\overset{\partial}\to\F_{2r+1}\overset{\partial}\to \F_{2r}$ is also semiexact, since
\begin{align*}
\partial_{\pi\vv_{r+1}}&\partial_{\pi\vv_{r+1}}(\vv_{r+1})=\partial_{\pi\vv_{r+1}}\T(\ww_r)=\\
&
=\partial_{\pi\vv_{r+1}}\Big(\sum_{t=0}^{m+q-1}t^*(m+q-t-1)_*\ww_{r}\Big)-
  \partial_{\pi\vv_{r+1}} \Big( \sum_{s=0}^{m-1}s^*(m-s-1)_*\ww_{r} \Big)
  \\
&
=\sum_{t=0}^{m+q-1}t^*(m+q-t-1)_*\partial_{\pi\ww_{r}}(\ww_{r})-
   \sum_{s=0}^{m-1}s^*(m-s-1)_*\partial_{\pi\ww_{r}}(\ww_{r})
   \\
&
=\sum_{t=0}^{m+q-1}t^*(m+q-t-1)_*\s(\ww_{r})-
   \sum_{s=0}^{m-1}s^*(m-s-1)_*\s(\ww_{r} )=\T\s(\ww_{r})=0.
\end{align*}
\end{proof}

We are now ready to establish the main result of this section.
\begin{theorem}\label{mtheo1} $\F_\bullet\overset{\epsilon}\to \Z$, defined as above,  is a free resolution of the $\HH C_{m,q}$-module $\Z$.
\end{theorem}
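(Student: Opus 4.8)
We have just seen that $\F_\bullet\xrightarrow{\epsilon}\Z$ is a complex whose terms are free $\HH C_{m,q}$-modules, so the only thing left to prove is exactness: $H_n(\F_\bullet)=0$ for $n\geq1$ and $H_0(\F_\bullet)\cong\Z$ via $\epsilon$. Since a sequence of $\HH C_{m,q}$-modules is exact precisely when it is exact at every object, it suffices to show that for each $x\in C_{m,q}$ the augmented complex of abelian groups $\F_\bullet(x)\to\Z(x)\to0$ is exact. Unwinding the definition of a free module, $\F_{2r}(x)$ is the free abelian group on the triples $u_*v^*\vv_r$ with $u\oplus(r\cdot m)\oplus v=x$ (that is, on $\HH C_{m,q}(r\cdot m,x)$), and $\F_{2r+1}(x)$ the free abelian group on the triples $u_*v^*\ww_r$ with $u\oplus(r\cdot m\oplus1)\oplus v=x$; using the module relations the differentials become
\[
\partial(u_*v^*\ww_r)=(u\oplus1)_*v^*\vv_r-u_*(v\oplus1)^*\vv_r,
\]
\[
\partial(u_*v^*\vv_{r+1})=\sum_{t=0}^{m+q-1}(u\oplus(m+q-t-1))_*(v\oplus t)^*\ww_r-\sum_{s=0}^{m-1}(u\oplus(m-s-1))_*(v\oplus s)^*\ww_r,
\]
and $\epsilon(u_*v^*\vv_0)=(x)$. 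So the theorem is reduced to a purely combinatorial exactness statement about this explicit complex of free abelian groups.

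The plan is to prove this exactness by exhibiting, for each fixed $x$, a $\Z$-linear contracting homotopy --- homomorphisms $h_{-1}\colon\Z(x)\to\F_0(x)$ and $h_n\colon\F_n(x)\to\F_{n+1}(x)$ (only $\Z$-linear; the $\HH C_{m,q}$-module structure is used solely in the differentials) with $\epsilon h_{-1}=\mathrm{id}$, $\partial_1h_0+h_{-1}\epsilon=\mathrm{id}$ and $\partial_{n+1}h_n+h_{n-1}\partial_n=\mathrm{id}$ for $n\geq1$. One takes $h_{-1}((x))=x_*\vv_0$. On the homotopies $h_{2r}$, which feed into the $\s$-type differential $\partial_{2r+1}$, $h$ should be a telescoping sum inverting $1_*-1^*$: for instance $h_0(u_*v^*\vv_0)=-\sum_{j=0}^{v-1}(u\oplus j)_*(v-1-j)^*\ww_0$ (and $0$ when $v=0$) already yields $\partial_1h_0+h_{-1}\epsilon=\mathrm{id}$ by a direct telescoping, and $h_{2r}$ for $r\geq1$ has the same shape, corrected by the term $h_{2r-1}\partial_{2r}$. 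On the homotopies $h_{2r+1}$, which feed into the $\T$-type differential $\partial_{2r+2}$, $h$ sends a basis triple to a suitably chosen single triple (or short partial segment of the trace sum), designed to be a one-sided inverse of $\T$ modulo $h_{2r}\partial_{2r+1}$; the semiexactness relations $\T\s=0=\s\T$ of Lemma~\ref{stts} are exactly what make such a choice possible, and this is where the index $m$ and the period $q$ genuinely intervene. When $m=0$ this data specializes to the classical $\Z$-linear contracting homotopy of the standard periodic free resolution of the cyclic group $C_{0,q}$ (with $\T$ the norm element and $\s$ multiplication by $1-t$), which is a useful sanity check. Granting the formulas, the three families of identities are verified by direct manipulation of the sums above.

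The real work --- and the main obstacle --- is that the ``remove one unit'' operation underpinning these homotopies has no single-valued incarnation as a map $C_{m,q}\to C_{m,q}$: subtracting $1$ is not defined on the periodic part $\{m,\dots,m+q-1\}$, and the normalization toward the base triple $(x,0)$ behaves differently on the index segment $\{0,\dots,m-1\}$ than on the periodic part; moreover the set of triples $u_*v^*\vv_r$ lying over a given $x$ has combinatorics (its size, and the way it transforms when a unit is pushed from $v$ to $u$) that depend delicately on the position of $x$ relative to $m$ and on $r$. Consequently $h$ must be defined by cases according to where $u$ and $v$ lie, and one must track the cancellation between the length-$(m+q)$ and length-$m$ sums constituting $\T$, with the extremal cases $m=0$ and $m=1$ and the behaviour across the index/period boundary each needing separate (routine) checking of $\partial h+h\partial=\mathrm{id}$. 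One mild simplification is that $r\mapsto r\cdot m$ is ultimately periodic, so $\F_\bullet$ is ultimately periodic and only finitely many shapes of the homotopy identity need to be verified directly; but there is no shortcut around constructing the homotopy itself.
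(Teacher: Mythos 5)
Your strategy is the paper's: reduce exactness to each fixed object $x\in C_{m,q}$ and exhibit a $\Z$-linear contracting homotopy of the augmented complex $\F_\bullet(x)\to\Z(x)$. Your $h_{-1}$ and $h_0$ are correct (a harmless mirror image of the paper's normalization, which uses $\phi(x)=(0,\vv_0,x)$ and pushes the $u$-coordinate to $0$ rather than the $v$-coordinate), and your telescoping verification of $\partial h_0+h_{-1}\epsilon=\mathrm{id}$ is sound. But the proof stops exactly where the content begins, and you say so yourself (``there is no shortcut around constructing the homotopy itself''). Two essential pieces are missing. First, the homotopy out of odd degrees is never specified: ``a suitably chosen single triple (or short partial segment of the trace sum), designed to be a one-sided inverse of $\T$'' is a wish, not a map, and the appeal to $\T\s=0=\s\T$ only guarantees a complex, not the existence of such a choice. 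The paper's answer is the indicator map $\Phi(u,\ww_r,v)=0$ for $u<m+q-1$ and $\Phi(m+q-1,\ww_r,v)=(0,\vv_{r+1},v)$ --- nonzero on a single value of $u$ and not a partial trace sum. Second, and more importantly, the identity $\partial\Phi+\Phi\partial=\mathrm{id}$ on $\F_{2r}(x)$ rests on a genuine combinatorial cancellation that your proposal never isolates: for a generator $(u,\vv_r,v)$, one must count how many $t$ with $0\le t\le m+q-1$ (resp.\ $0\le t\le m-1$) satisfy $u\oplus t=m+q-1$; writing $lq<u\le(l+1)q$, these are exactly the $l+1$ (resp.\ $l$) values $t=m+(k+1)q-1-u$, and the resulting difference $\sum_{k=0}^{l}(0,\vv_r,v\oplus(u-kq))-\sum_{k=0}^{l-1}(0,\vv_r,v\oplus(u-(k+1)q))$ collapses to the single term $(0,\vv_r,v\oplus u)$. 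This is precisely where the index/period interaction you flag as ``the main obstacle'' gets resolved, and it is absent.

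A smaller point: the even-to-odd homotopy needs no case analysis or correction term of the form $h_{2r-1}\partial_{2r}$; the single uniform formula $\Phi(u,\vv_r,v)=\sum_{t=0}^{u-1}(t,\ww_r,v\oplus(u-t-1))$ works for every $r\ge 0$, so the difficulty you anticipate there does not materialize, while the difficulty in odd degrees is real but is handled by the counting argument above rather than by case-splitting on the position of $u$ and $v$. As it stands, your submission is a correct plan with a verified base case, not a proof of the theorem.
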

\begin{proof} We only have to prove its exactness or, equivalently,  that, for any fixed $x\in C_{m,q}$, the augmented complex of abelian groups
\begin{equation}\label{frex}
\F_\bullet(x)\overset{\epsilon_x}\to \Z(x):
\hspace{0.5cm}\cdots \to \F_2(x)\overset{\partial_x}\to \F_1(x)\overset{\partial_x}\to \F_0(x)\overset{\epsilon_x}\to \Z(x)\to 0,
\end{equation}
is exact. To do so, we are going to show that it has a contracting homotopy. That is, there are homomorphisms $\phi:\Z(x)\to \F_0(x)$ and $\Phi:\F_n(x)\to\F_{n+1}(x)$ for $n\geq 0$, such that $\epsilon_x\phi=id_{\Z(x)}$, $\phi\,\epsilon_x+ \partial_x\Phi=id_{\F_0(x)} $, and for $n\geq 1$, $\Phi\partial_x+\partial_x\Phi=id_{\F_n(x)}$.

These homomorphisms $\phi$ and $\Phi$ are defined on the generators and extended linearly. Recall that $\Z(x)$ is the free abelian group on the generator $(x)$  and, for each  $r\geq 0$, $\F_{2r}(x)$ is the free abelian group on the set
$$\{(u,\vv_r,v)\mid u,v\in C_{m,q} \text{ with }  u\oplus r\cdot m \oplus v=x\},$$
and  $\F_{2r+1}(x)$ is the free abelian group on the set
$$\{(u,\ww_r,v)\mid u,v\in C_{m,q} \text{ with }  u\oplus r\cdot m\oplus v\oplus 1=x\}.$$
Then, we define

\begin{itemize}
\item[-]  $\phi:\Z(x)\to \F_0(x)$ to be the homomorphism determined by $$\phi(x)=(0,\vv_0,x).$$
\end{itemize}
and, for $r\geq 0$,

\begin{itemize}
\item[-]  $\Phi:\F_{2r+1}(x)\to \F_{2r+2}(x)$ to be the homomorphism determined by $$\Phi(u,\ww_r,v)=\left\{\begin{array}{cll}0&\text{ if} & u<m+q-1\\
    (0,\vv_{r+1},v)&\text{ if} & u=m+q-1  \end{array}\right. $$
\end{itemize}
\begin{itemize}
\item[-]  $\Phi:\F_{2r}(x)\to \F_{2r+1}(x)$ to be the homomorphism determined by $$\Phi(u,\vv_r,v)=\sum_{t=0}^{u-1}(t,\ww_r,v\oplus(u-t-1)).$$
\end{itemize}
So defined, we prove that these homomorphisms establish a contracting homotopy on the augmented chain complex \eqref{frex} as follows.

\vspace{0.2cm}

\underline{$\epsilon_x\phi=id_{\Z(x)}$}, since
$$
\epsilon_x\phi(x)=\epsilon_x(0,\vv_0,x)=\epsilon_x(x^*\vv_0)=x^*\epsilon_0(\vv_0)
=x^*(0)=(x).
$$

\underline{$\partial_x\Phi+\phi\,\epsilon_x=id_{\F_0(x)}$}, since, for any $u,v\in C_{m,q}$ with $u\oplus v=x$,
\begin{align*}\partial_x\Phi(u,\vv_0,v)&=\partial_x\Big(\sum_{t=0}^{u-1}(t,\ww_0,
v\oplus(u-t-1))\Big)
=\partial_x\Big(\sum_{t=0}^{u-1}t_*(v\oplus(u-t-1))^*\ww_0\Big)\\
&=\sum_{t=0}^{u-1}t_*(v\oplus(u-t-1))^*\partial_{1}(\ww_0)=
\sum_{t=0}^{u-1}t_*(v\oplus(u-t-1))^*(1_*\vv_0-1^*\vv_0)\\&=
\sum_{t=0}^{u-1}(t+1)_*\wp(u+v-t-1)^*\vv_0-\sum_{t=0}^{u-1}t_*\wp(u+v-t)^*\vv_0\\
&= u_*v^*\vv_0-\wp(u+v)^*\vv_0=u_*v^*\vv_0-x^*\vv_0=(u,\vv_0,v)-(0,\vv_0,x),
\end{align*}
\begin{align*}\phi\,\epsilon_x(u,\vv_0,v)&=\phi\,\epsilon_x(u_*v^*\vv_0)=
\phi(u_*v^*\epsilon_0(\vv_0))=\phi(u_*v^*(0))=\phi(u\oplus v)=\phi(x)\\&=(0,\vv_0,x),
\end{align*}
and therefore $(\partial_x\Phi+\phi\,\epsilon_x)(u,\vv_0,v)=(u,\vv_0,v)$, for any generator $(u,\vv_0,v)$ of $\F_0(x)$.

\vspace{0.2cm}
\underline{$\partial_x\Phi+\Phi\partial_x=id_{\F_{2r+1}}(x)$}, since for any generator $(u,\ww_r,v)$ of $\F_{2r+1}(x)$ with $u<m+q-1$,

\begin{align*}(\partial_x\Phi+\Phi\partial_x)(u,\ww_r,v)&=\Phi\partial_x(u,\ww_r,v)
=\Phi\partial_x(u_*v^*\ww_r)=\Phi\big(u_*v^*\partial_{\pi\ww_r}(\ww_r)\big)\\&=
\Phi\big(u_*v^*(1_*\vv_r-1^*\vv_r)\big)=\Phi\big((u+1)_*v^*\vv_r-u_*(v\oplus 1)^*\vv_r)\big)\\&=
\Phi(u+1,\vv_r,v)-\Phi(u,\vv_r,\wp(v+1))\\&=
\sum_{t=0}^{u}(t,\ww_r,\wp(u+v-t))-\sum_{t=0}^{u-1}(t,\ww_r,\wp(u+v-t))\\&=
(u,\ww_r,\wp(v))=(u,\ww_r,v),
\end{align*}
while for generators $(m+q-1,\ww_r, v)$ of $\F_{2r+1}(x)$, we have
\begin{align*}\partial_x\Phi(m+q-1,\ww_r,v)&=\partial_x(0,\vv_{r+1},v)=\partial_x(v^*\vv_{r+1})=
v^*\partial_{\pi \vv_{r+1}}(\vv_{r+1})\\&=\sum_{t=0}^{m+q-1}v^*t_*(m+q-t-1)^*\ww_r-
\sum_{t=0}^{m-1}v^*t_*(m-t-1)^*\ww_r\\&=
\sum_{t=0}^{m+q-1}(t,\ww_r,\wp(v+m+q-t-1))-\sum_{t=0}^{m-1}(t,\ww_r,\wp(v+m-t-1))\\&=
\sum_{t=0}^{m+q-1}(t,\ww_r,\wp(v+m+q-t-1))-\Phi(m,\vv_r,v),
\end{align*}
\begin{align*}\Phi\partial_x(m+q-1,\ww_r,v)&= \Phi\big((m+q-1)_*v^*\partial_{\pi\ww_r}(\ww_r)\big)=
\Phi\big((m+q-1)_*v^*(1_*\vv_r-1^*\vv_r)\big)\\&=
\Phi(m_*v^*\vv_r)-\Phi\big((m+q-1)_*(1\oplus v)^*\vv_r)\big)\\&=
\Phi(m,\vv_r,v)-\Phi(m+q-1,\vv_r,\wp(1+v))\\&=
\Phi(m,\vv_r,v)-\sum_{t=0}^{m+q-2}(t,\ww_r,\wp(v+m+q-t-1)),
\end{align*}
whence $(\partial_x\Phi+\Phi\partial_x)(m+q-1,\ww_r,v)=(m+q-1,\ww_r,\wp(v))=(m+q-1,\ww_r,v)$.

\vspace{0.2cm}
And, finally, we prove that \underline{$\partial_x\Phi+\Phi\partial_x=id_{\F_{2r}}(x)$}. To do that, let  $(u,\vv_r,v)$ be any fixed generator of $\F_{2r}(x)$. Then, on the one hand,
\begin{align*}\partial_x\Phi(u,\vv_r,v)&=\partial_x\Big(\sum_{t=0}^{u-1}
\big(t,\ww_r,(v\oplus(u-t-1))\big)\Big)
=\partial_x\Big(\sum_{t=0}^{u-1}t_*(v\oplus(u-t-1))^*\ww_r\Big)\\
&=\sum_{t=0}^{u-1}t_*(v\oplus(u-t-1))^*\partial_{\pi\ww_r}(\ww_r)=
\sum_{t=0}^{u-1}t_*(v\oplus(u-t-1))^*(1_*\vv_r-1^*\vv_r)\\ &=
\sum_{t=0}^{u-1}(t+1)_*\wp(u+v-t-1)^*\vv_r-\sum_{t=0}^{u-1}t_*\wp(u+v-t)^*\vv_r\\
&= u_*v^*\vv_r-\wp(u+v)^*\vv_r=u_*v^*\vv_r-(u\oplus v)^*\vv_r=(u,\vv_r,v)-(0,\vv_r,u\oplus v),
\end{align*}
while, on the other hand, we have
\begin{align*}
\Phi\partial_x&(u,\vv_r,v)=\Phi\big(u_*v^*\partial_{\pi\vv_r}(\vv_r)\big)\\  &=
\Phi\Big(u_*v^*\sum_{t=0}^{m+q-1}t_*(m+q-t-1)^*\ww_{r-1}-u_*v^*\sum_{t=0}^{m-1}t_*(m-t-1)^*\ww_{r-1} \Big)\\ &=
\sum_{t=0}^{m+q-1}\Phi\big(u\oplus t,\ww_{r-1},v\oplus (m+q-t-1)\big)-
\sum_{t=0}^{m-1}\Phi\big(u\oplus t,\ww_{r-1},v\oplus (m-t-1)\big).
\end{align*}
 Now, if $l\geq 0$ is integer such that $lq<u\leq (l+1)q$, then it is easy to see that the various $t$, with $0\leq t\leq m+q-1$ (resp. $0\leq t\leq m-1$), such that $u\oplus t=m+q-1$, that is, $\wp(u+t)=u+q-1$,  are just those of the form $t=m+(k+1)q-1-u$ for $0\leq k\leq l$ (resp. $0\leq k\leq l-1$). Hence,
 \begin{align*}
\Phi\partial_x(u,\vv_r,v)&=
\sum_{k=0}^{l}(0,\vv_r,v\oplus(u-kq))-\sum_{k=0}^{l-1}(0,\vv_r,v\oplus(u-(k+1)q))=(0,\vv_r,v\oplus u),
 \end{align*}
and thus we get $(\partial_x\Phi+\Phi\partial_x)(u,\vv_r,v)=(u,\vv_r,v)$. This makes complete the proof. \end{proof}

\section{The (co)homology groups of $C_{m,q}$.}
By Theorem \ref{mtheo1}, the (co)homology groups of the cyclic monoid of index $m$ and period $q$, $C_{m,q}$,  can be computed by means of the complex $\F_\bullet$ in \eqref{fres} as
$$\begin{array}{l}
\mathrm{H}^n(C_{m,q},\A)= \mathrm{H}^n\text{Hom}_{\HH C_{m,q} }(\F_\bullet,\A),\\[6pt]
\mathrm{H}_n(C_{m,q},\B)= \mathrm{H}_n(\B\otimes_{\HH C_{m,q} }\F_\bullet),
\end{array}
$$
for $\A$ any left $\HH C_{m,q} $-module and $\B$ any right $\HH C_{m,q} $-module.

Now, for each $r\geq 0$, the $\HH C_{m,q} $-module $\F_{2r}$ is free on the unitary set $\{\vv_r\}$ with $\pi \vv_r=r\cdot m$, while $\F_{2r+1}$ is free on the unitary set $\{\ww_r\}$ with $\pi \ww_r=r\cdot m\oplus 1$. Then, by Proposition \ref{adfu}, there are natural isomorphisms
$$
\text{Hom}_{\HH C_{\!m,q} }(\F_{2r},\A)\cong \A(r\cdot m), \hspace{0.3cm}
\text{Hom}_{\HH C_{\!m,q} }(\F_{2r+1},\A)\cong \A(r\cdot m\oplus 1),
$$
respectively given by $f\mapsto f_{\pi\vv_r}(\vv_r)$ and $g\mapsto g_{\pi\ww_r}(\ww_r)$, which make the diagram
$$\xymatrix@C=15pt{\text{Hom}_{\HH C_{\!m,q\!} }(\F_{2r},\A)\ar[r]^{\partial^*}
\ar[d]_{\textstyle \cong}&
\text{Hom}_{\HH C_{\!m,q\!} }(\F_{2r+1},\A)\ar[r]^{\partial^*}\ar[d]_{\textstyle \cong}&
\text{Hom}_{\HH C_{\!m,q\!} }(\F_{2r+2},\A)\ar[d]_{\textstyle \cong} \\
\A(r\cdot m)\ar[r]^{\s}&\A(r\cdot m\oplus 1)\ar[r]^{\T}&
\A((r+1)\cdot m),}
$$
commutative, where $\s$ and $\T$ are the homomorphisms \eqref{s} and \eqref{trace} in Definition \ref{defst}.
Therefore, recalling the notations in \eqref{not1}, we obtain:
\begin{theorem}\label{mth2} Let $\A$ be any left $\HH C_{m,q} $-module. Then,
$$\mathrm{H}^0(C_{m,q},\A)\cong \A^\s(0), $$
and, for any $r\geq 0$,
\begin{equation}\label{cal1cy}
\begin{array}{lll} \mathrm{H}^{2r+1}(C_{m,q},\A)\cong \frac{\textstyle  \A^\T(r\cdot m\oplus 1)}{\textstyle \A_\s(r\cdot m )},&&
\mathrm{H}^{2r+2}(C_{m,q},\A)\cong \frac{\textstyle  \A^\s((r+1)\cdot m)}{\textstyle \A_\T(r\cdot m\oplus 1)}.
\end{array}
\end{equation}
\end{theorem}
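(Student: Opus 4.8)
The plan is to read $\mathrm{H}^\bullet(C_{m,q},\A)$ off the explicit cochain complex obtained by applying $\mathrm{Hom}_{\HH C_{m,q}}(-,\A)$ to the free resolution $\F_\bullet\overset{\epsilon}\to\Z$ of Theorem \ref{mtheo1}; since that resolution is made of free, hence projective, $\HH C_{m,q}$-modules, we have $\mathrm{H}^n(C_{m,q},\A)=\mathrm{H}^n\mathrm{Hom}_{\HH C_{m,q}}(\F_\bullet,\A)$, as recorded before the statement. The first step is to invoke Proposition \ref{adfu} at each of the singleton sets over $C_{m,q}$ involved, to get the identifications $\mathrm{Hom}_{\HH C_{m,q}}(\F_{2r},\A)\cong \A(r\cdot m)$ and $\mathrm{Hom}_{\HH C_{m,q}}(\F_{2r+1},\A)\cong \A(r\cdot m\oplus 1)$, given by $f\mapsto f_{\pi\vv_r}(\vv_r)$ and $g\mapsto g_{\pi\ww_r}(\ww_r)$.

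The second step is to check that under these identifications the coboundary $\partial^*$ becomes, alternately, the homomorphism $\s$ of \eqref{s} and the trace map $\T$ of \eqref{trace}. This is immediate from the definition of the differentials of $\F_\bullet$ together with the naturality of a morphism of $\HH C_{m,q}$-modules: for instance, if $f\colon\F_{2r}\to\A$ corresponds to $a=f_{\pi\vv_r}(\vv_r)$, then $\partial^*f=f\partial$ corresponds to $f_{\pi\ww_r}\!\big(\partial_{\pi\ww_r}(\ww_r)\big)=f_{\pi\ww_r}(1_*\vv_r-1^*\vv_r)=1_*a-1^*a=\s(a)$, and symmetrically $\partial^*$ on $\mathrm{Hom}_{\HH C_{m,q}}(\F_{2r+1},\A)$ becomes $\T$; this is exactly the commutative diagram displayed just before the statement.

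Consequently the cochain complex computing $\mathrm{H}^\bullet(C_{m,q},\A)$ is isomorphic to the two-periodic complex
$$0\to \A(0)\overset{\s}\to \A(1)\overset{\T}\to \A(m)\overset{\s}\to \A(m\oplus 1)\overset{\T}\to \A(2m)\overset{\s}\to\cdots,$$
whose degree-$0$ term is $\A(0\cdot m)$, degree-$1$ term is $\A(0\cdot m\oplus 1)$, and so on, the coboundary being $\s$ out of the even degrees and $\T$ out of the odd ones. The last step is to take cohomology termwise: in degree $0$ one gets $\ker\big(\s\colon\A(0)\to\A(1)\big)=\A^\s(0)$; in degree $2r+1$, sitting at $\A(r\cdot m\oplus 1)$, one gets $\ker\big(\T\colon\A(r\cdot m\oplus 1)\to\A((r+1)\cdot m)\big)$ modulo the image of $\s\colon\A(r\cdot m)\to\A(r\cdot m\oplus 1)$, i.e.\ $\A^\T(r\cdot m\oplus 1)/\A_\s(r\cdot m)$; and in degree $2r+2$, sitting at $\A((r+1)\cdot m)$, one gets $\A^\s((r+1)\cdot m)/\A_\T(r\cdot m\oplus 1)$. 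The quotients are legitimate because $\F_\bullet$ is a complex, i.e.\ $\partial^*\partial^*=0$, which translates into $\T\s=0$ and $\s\T=0$ (Lemma \ref{stts}).

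Essentially all the substance has been absorbed into the construction of the resolution and the adjunction isomorphism, so there is no genuine obstacle here; the only point needing care is the bookkeeping of the second step — verifying once and for all that the two kinds of coboundary in $\mathrm{Hom}_{\HH C_{m,q}}(\F_\bullet,\A)$ are exactly $\s$ and $\T$ (and not these maps precomposed with some structure homomorphism) — which is handled by the naturality computation above, equivalently by the diagram preceding the statement.
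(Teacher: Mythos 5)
Your proposal is correct and follows exactly the paper's route: the paper's argument is precisely the discussion preceding the theorem, namely applying $\mathrm{Hom}_{\HH C_{m,q}}(-,\A)$ to the resolution of Theorem \ref{mtheo1}, identifying the terms via Proposition \ref{adfu}, and checking that the coboundaries become $\s$ and $\T$ through the displayed commutative diagram. Your naturality computation for the second step is the same verification the paper leaves implicit, so there is nothing to add.
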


For instance, let us consider the $\HH C_{m,q} $-module $\Z$ for coefficients. In this case, for any $x\in C_{m,q}$, $x\geq 1$,  the trace map $\T:\Z(x)\to \Z(m\oplus (x-1))$ is the homomorphism of multiplication by $q$, since \begin{align*}\T(x)&=\sum_{i=0}^{m+q-1}t^*(m+q-t-1)_*(x)-\sum_{i=0}^{m-1}t^*(m-t-1)_*(x)\\&=
\sum_{i=0}^{m+q-1}(m\oplus (x-1))-\sum_{i=0}^{m-1}(m\oplus (x-1))=p (m\oplus (x-1)),
\end{align*}
while, for all $x$,  $\s:\Z(x)\to \Z(x\oplus 1)$ is the zero homomorphism, since $$\s(x)=1_*(x)-1^*(x)=(1\oplus x)-(x\oplus 1)=0.$$
Therefore, $\mathrm{H}^{0}(C_{m,q},\mathbb{Z})\cong \Z$ and, for any $r\geq 0$,
$$
\mathrm{H}^{2r+1}(C_{m,q},\mathbb{Z})\cong 0, \hspace{0.3cm}
\mathrm{H}^{2r+2}(C_{m,q},\mathbb{Z})\cong \mathbb{Z}/q\mathbb{Z}.
$$

We should note that the isomorphisms \eqref{cal1cy} in the particular case when $r=0$, that is,
$$
\begin{array}{lll} \mathrm{H}^{1}(C_{m,q},\A)\cong \frac{\textstyle  \A^\T(1)}{\textstyle \A_\s(0)}, &&
\mathrm{H}^{2}(C_{m,q},\A)\cong \frac{\textstyle  \A^\s(m)}{\textstyle \A_\T(1)},
\end{array}
$$
were proven by Leech in \cite[Chapter II, 7.20, 7.21]{leech}.

\vspace{0.2cm}
As for homology, if $\B$ is any right $\HH C_{m,q} $-module, by Proposition \ref{pwf}, there are natural isomorphisms
$$
\B\otimes_{\HH C_{\!m,q} }\F_{2r}\cong \B(r\cdot m), \hspace{0.3cm}
\B\otimes_{\HH C_{\!m,q} }\F_{2r+1}\cong \B(r\cdot m\oplus 1),
$$
respectively given on generators by $a'\otimes (u,\vv_r,v)\mapsto u^*v_*a'$ and $a'\otimes (u,\ww_r,v)\mapsto u^*v_*a'$, which make the diagram
$$\xymatrix{\B\otimes_{\HH C_{\!m,q} }\F_{2r+2}\ar[r]^{id\otimes \partial}
\ar[d]_{\textstyle \cong}&
\B\otimes_{\HH C_{\!m,q} }\F_{2r+1}\ar[r]^{id\otimes \partial}\ar[d]_{\textstyle \cong}&
\B\otimes_{\HH C_{\!m,q} }\F_{2r}\ar[d]_{\textstyle \cong} \\
\B((r+1)\cdot m)\ar[r]^{\T}&\B(r\cdot m\oplus 1)\ar[r]^{\s}&
\B(r\cdot m),}
$$
commutative, where, for each $x\in C_{m,q}$, $x\geq 1$, the homomorphism
$
\T:\B(m\oplus (x-1))\longrightarrow \B(x)
$
is the {\em `trace map'}, defined by
$$
\T(b)=\sum_{t=0}^{m+q-1}t^*(m+q-t-1)_*b-
    \sum_{t=0}^{m-1}t^*(m-t-1)_*b,
$$
and, for any $x\in C_{m,q}$,
$
\s: \B(x\oplus 1) \to \B(x)
$
is the homomorphism defined by
$ \s(b)=1_*b-1^*b$.

Then, introducing the subgroups (parallel to those in \eqref{not1})
$$
\begin{array}{ll}
\B^\T(x)=\{b\in \B(m\oplus (x-1))\mid \T(b)=0\},&
\B_\T(x)=\{\T(b)\mid b\in \B(m\oplus (x-1)\},\\ [5pt]
\B^\s(x)=\{b\in \B(x\oplus 1)\mid \s(b)=0\},&
\B_\s(x)=\{\s(b)\mid b\in \B(x\oplus 1)\},
\end{array}
$$
we have the following.
\begin{theorem} \label{mth3} Let $\B$ be any right $\HH C_{m,q} $-module. Then,
$$\mathrm{H}_0(C_{m,q},\B)\cong \frac{\textstyle \B(0)}{\B_\s(0)}, $$
and, for any $r\geq 0$,
\begin{equation}\label{cal2cy}
\begin{array}{lll} \mathrm{H}_{2r+1}(C_{m,q},\B)\cong \frac{\textstyle  \B^\s(r\cdot m)}{\textstyle \B_\T(r\cdot m\oplus 1)},&&
\mathrm{H}_{2r+2}(C_{m,q},\B)\cong \frac{\textstyle  \B^\T(r\cdot m\oplus 1)}{\textstyle \B_\s((r+1)\cdot m)}.
\end{array}
\end{equation}
\end{theorem}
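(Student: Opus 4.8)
The strategy is to apply the functor $\B\otimes_{\HH C_{m,q}}-$ to the free resolution $\F_\bullet\overset{\epsilon}\to\Z$ of Theorem~\ref{mtheo1} and then read off the homology of the resulting chain complex, in complete analogy with the derivation of Theorem~\ref{mth2}. By the general procedure for computing the homology of a monoid recalled in Section~\ref{prel}, $\mathrm{H}_n(C_{m,q},\B)$ is the $n$-th homology group of $\B\otimes_{\HH C_{m,q}}\F_\bullet$, so everything reduces to identifying this complex explicitly.

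First I would invoke Proposition~\ref{pwf} to obtain, for each $r\geq 0$, the natural isomorphisms $\B\otimes_{\HH C_{m,q}}\F_{2r}\cong\B(r\cdot m)$ and $\B\otimes_{\HH C_{m,q}}\F_{2r+1}\cong\B(r\cdot m\oplus 1)$ recorded just above, which send a generator $b'\otimes(u,\vv_r,v)$, respectively $b'\otimes(u,\ww_r,v)$, to $u^*v_*b'$. Next I would transport the differentials $id\otimes\partial$ across these isomorphisms. Expanding $\partial\vv_{r+1}=\T\ww_r$ and $\partial\ww_r=\s\vv_r=1_*\vv_r-1^*\vv_r$, the elements $b'\otimes\partial\vv_{r+1}$ and $b'\otimes\partial\ww_r$ can be rewritten, by means of the coend relations defining $\otimes_{\HH C_{m,q}}$, as the images under the isomorphism of Proposition~\ref{pwf} of $\T(b')$ and $\s(b')$ respectively (up to a sign, which is irrelevant for homology); this is exactly the content of the commutative diagram displayed before the theorem. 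Hence $\B\otimes_{\HH C_{m,q}}\F_\bullet$ is isomorphic, as a chain complex, to
$$
\cdots\overset{\s}\longrightarrow\B(r\cdot m)\overset{\T}\longrightarrow\B((r-1)\cdot m\oplus 1)\overset{\s}\longrightarrow\B((r-1)\cdot m)\overset{\T}\longrightarrow\cdots\overset{\s}\longrightarrow\B(0)\longrightarrow 0,
$$
with $\B(r\cdot m)$ placed in degree $2r$ and $\B(r\cdot m\oplus 1)$ in degree $2r+1$.

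It then only remains to read off the homology. In degree $0$ one gets the cokernel of $\s\colon\B(1)\to\B(0)$, that is $\B(0)/\B_\s(0)$. In odd degree $2r+1$ one gets $\ker\big(\s\colon\B(r\cdot m\oplus 1)\to\B(r\cdot m)\big)$ modulo the image of $\T\colon\B((r+1)\cdot m)\to\B(r\cdot m\oplus 1)$, which is precisely $\B^\s(r\cdot m)/\B_\T(r\cdot m\oplus 1)$ by the very definitions of those subgroups; and in even degree $2r+2$ one gets $\ker\big(\T\colon\B((r+1)\cdot m)\to\B(r\cdot m\oplus 1)\big)$ modulo the image of $\s\colon\B((r+1)\cdot m\oplus 1)\to\B((r+1)\cdot m)$, i.e. $\B^\T(r\cdot m\oplus 1)/\B_\s((r+1)\cdot m)$. (That these are honest quotients, i.e. that $\T\s=0$ and $\s\T=0$ for right modules, is the right-module analogue of Lemmas~\ref{tsq} and~\ref{stts}, and is in any case automatic since $\F_\bullet$ is a complex.) The only step carrying any content is the identification of the transported differentials with $\T$ and $\s$ up to sign, and this is a purely formal dualization of the computation already performed for cohomology just before Theorem~\ref{mth2}; so I expect no genuine obstacle.
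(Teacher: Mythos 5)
Your proposal is correct and follows exactly the paper's own route: the paper proves Theorem~\ref{mth3} precisely by applying $\B\otimes_{\HH C_{m,q}}-$ to the resolution of Theorem~\ref{mtheo1}, identifying the terms via Proposition~\ref{pwf}, transporting the differentials to the right-module trace map $\T$ and the map $\s$, and reading off the homology. Your bookkeeping of the degrees and of the subgroups $\B^\s$, $\B_\s$, $\B^\T$, $\B_\T$ matches the paper's conventions, so there is nothing to add.
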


Thus, for example,
$$
\begin{array}{lll} \mathrm{H}_{1}(C_{m,q},\B)\cong \frac{\textstyle  \B^\s(0)}{\textstyle \B_\T(1)},&&
\mathrm{H}_{2}(C_{m,q},\B)\cong \frac{\textstyle  \B^\T(1)}{\textstyle \B_\s(m)}.
\end{array}
$$

It is well-known that the (co)homology of a finite cyclic group $C_q=C_{0,q}$ is periodic with a period of 2. Indeed, when $m=0$, isomorphisms \eqref{cal1cy} and \eqref{cal2cy} state that, for any left $\HH C_{q}$-module $\A$ and right $\HH C_{q}$-module $\B$, and any integer $r\geq 0$, there are isomorphisms
$$ \mathrm{H}^{2r+1}(C_q ,\A)\cong   \A^\T(1)/ \A_\s(0) ,\hspace{0.4cm}
 \mathrm{H}^{2r+2}(C_q ,\A)\cong  \A^\s(0)/ \A_\T(1),
$$
$$ \mathrm{H}_{2r+1}(C_q ,\B)\cong   \B^\s(0)/ \B_\T(1) ,\hspace{0.4cm}
 \mathrm{H}_{2r+2}(C_q ,\B)\cong  \B^\T(1)/ \B_\s(0),
 $$
whence the periodicity of the (co)homology of $C_q$ follows trivially.
The following proposition states that, from dimension 3 onwards, the (co)homology of any finite cyclic monoid $C_{m,q}$ is periodic with a period of $2q/(m,q)$, where $(m,q)$ denotes the greatest common divisor of the index and the period. More precisely,

\begin{proposition} Let $p, n\geq 3$ be integers such that $p\equiv n\!\!\mod{2q/(m,q)}$. Then, for any left $\HH C_{m,q} $-module $\A$ and right $\HH C_{m,q} $-module $\B$, there are isomorphisms
\begin{equation}\label{isoper}
 \mathrm{H}^{p}(C_{m,q},\A)\cong \mathrm{H}^{n}(C_{m,q},\A),\hspace{0.4cm}
  \mathrm{H}_{p}(C_{m,q},\B)\cong \mathrm{H}_{n}(C_{m,q},\B).
\end{equation}

If $m=1$, then there are also  isomorphisms
$$
 \mathrm{H}^{n}(C_{1,q},\A)\cong \mathrm{H}^{2}(C_{1,q},\A),\hspace{0.4cm}
  \mathrm{H}_{n}(C_{1,q},\B)\cong \mathrm{H}_{2}(C_{1,q},\B),
$$
for any $n\geq 2$ such that $n\equiv 2\!\!\mod{2q}$.

\end{proposition}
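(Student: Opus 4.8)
The plan is to read off the periodicity directly from the explicit computations in Theorems~\ref{mth2} and~\ref{mth3}. For $n\geq 3$ write $n=2r+1$ or $n=2r+2$, so that $r\geq 1$; then those theorems present $\mathrm{H}^{n}(C_{m,q},\A)$ and $\mathrm{H}_{n}(C_{m,q},\B)$, via \eqref{cal1cy} and \eqref{cal2cy}, as subquotients of $\A$, resp.\ $\B$, evaluated at the element $r\cdot m\in C_{m,q}$ and at $(r+1)\cdot m=(r\cdot m)\oplus m$. Since the homomorphisms $\T$ and $\s$ of Definition~\ref{defst} are intrinsic to the module and the monoid, each of these groups depends on $r$ \emph{only} through the element $r\cdot m\in C_{m,q}$. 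Thus the proposition will reduce to the elementary monoid-arithmetic fact: for all integers $r,r'\geq 1$ with $r\equiv r'\pmod{q/(m,q)}$ one has $r\cdot m=r'\cdot m$ in $C_{m,q}$. I would prove this straight from the formula for $\wp\colon\N\to C_{m,q}$: for $r\geq 1$ we have $rm\geq m$, hence $r\cdot m=\wp(rm)=m+\bigl(m(r-1)\bmod q\bigr)$, and since the additive order of $m$ in $\Z/q\Z$ is exactly $q/(m,q)$, the residue $m(r-1)\bmod q$, and therefore $r\cdot m$, depends only on $(r-1)\bmod (q/(m,q))$.

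With this fact in hand, \eqref{isoper} is immediate. Because $2q/(m,q)$ is even, the hypothesis $p\equiv n\pmod{2q/(m,q)}$ forces $p$ and $n$ to have the same parity, so write $p=2r+\delta$ and $n=2r'+\delta$ with $\delta\in\{1,2\}$. Then $p,n\geq 3$ gives $r,r'\geq 1$, and halving the congruence gives $r\equiv r'\pmod{q/(m,q)}$, whence $r\cdot m=r'\cdot m$ by the fact above, and also $(r+1)\cdot m=(r'+1)\cdot m$ when $\delta=2$ (as $r+1\equiv r'+1$ and both exceed $0$). Substituting these equalities into \eqref{cal1cy} and \eqref{cal2cy} yields $\mathrm{H}^{p}(C_{m,q},\A)\cong\mathrm{H}^{n}(C_{m,q},\A)$ and $\mathrm{H}_{p}(C_{m,q},\B)\cong\mathrm{H}_{n}(C_{m,q},\B)$.

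For the final assertion, take $m=1$, so $(m,q)=1$; here one can descend to degree~$2$ because of a coincidence special to $m=1$. Namely, for \emph{every} $r\geq 0$ the two elements entering the even-degree formulas both equal $\wp(r+1)$: indeed $(r+1)\cdot 1=\wp(r+1)$ directly, while $r\cdot 1\oplus 1=\wp(r)\oplus\wp(1)=\wp(r+1)$ because $\wp$ is a monoid homomorphism; and since $r+1\geq 1=m$, this value is $\wp(r+1)=1+(r\bmod q)$, which is periodic in $r$ with period $q$, now including $r=0$. Consequently \eqref{cal1cy} gives $\mathrm{H}^{2r+2}(C_{1,q},\A)\cong\A^{\s}\bigl(1+(r\bmod q)\bigr)/\A_{\T}\bigl(1+(r\bmod q)\bigr)$, which for $r\equiv 0\pmod q$ is $\A^{\s}(1)/\A_{\T}(1)\cong\mathrm{H}^{2}(C_{1,q},\A)$, and likewise \eqref{cal2cy} gives the homology isomorphism. (The odd-degree groups cannot be pushed down this way, since they involve $r\cdot m$ itself and $0\cdot m=0$ differs from its eventual value — in agreement with the proposition not claiming periodicity of $\mathrm{H}^{1}$ or $\mathrm{H}_{1}$.)

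The only steps requiring genuine care are the monoid-arithmetic fact and the companion bookkeeping that the subgroups occurring in \eqref{cal1cy}--\eqref{cal2cy} really lie inside a common abelian group once $r\cdot m$ is fixed — in particular the identity $m\oplus\bigl((r\cdot m\oplus 1)-1\bigr)=(r+1)\cdot m$, which one verifies by separating off the case $r\cdot m=m+q-1$. Everything else is the parity remark together with direct substitution into the formulas of Theorems~\ref{mth2} and~\ref{mth3}, so the write-up should be brief.
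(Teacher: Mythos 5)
Your proposal is correct and follows essentially the same route as the paper: both reduce the claim to the observation that for $r,s\geq 1$ with $r\equiv s\pmod{q/(m,q)}$ one has $r\cdot m=s\cdot m$ (hence also $r\cdot m\oplus 1=s\cdot m\oplus 1$ and $(r+1)\cdot m=(s+1)\cdot m$), and then substitute into \eqref{cal1cy} and \eqref{cal2cy}; the $m=1$ case is likewise handled by noting that $(r+1)\cdot 1=r\cdot 1\oplus 1=1$ when $r\equiv 0\pmod q$. Your write-up merely makes explicit, via the formula for $\wp$, the step the paper asserts as "$rm\equiv sm\bmod q$, hence $r\cdot m=s\cdot m$."
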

\begin{proof} Let $p,n\geq 3$ be integers such that $p\equiv n\!\!\mod{2q/(m,q)}$. Then, $p\equiv n\!\!\mod{2}$ and we can write $p=2r+1$ and $n=2s+1$ or $p=2r+2$ and $n=2s+2$ for some integers $r,s\geq 1$ satisfying $r\equiv s\!\!\mod{q/(m,q)}$ or, equivalently, satisfying that $rm\equiv sm\!\!\mod{q}$. Hence, $r\cdot m=s\cdot m$, $r\cdot m\oplus 1=s\cdot m\oplus 1$, and $(r\cdot m)\oplus m=(s\cdot m)\oplus m$, whence the isomorphisms in \eqref{isoper} follow from those in \eqref{cal1cy} and \eqref{cal2cy}.

Suppose now that the cyclic monoid is of index one\footnote{A cyclic monoid of index $m=1$ and period $q$ is the same thing that a cyclic group of order $q$ with a identity adjoined.}, and let $r\geq 0$ be  such that $r\equiv 0 \!\! \mod{q}$. Then $r\oplus 1=1$, and therefore
$$
\mathrm{H}^{2r+2}(C_{1,q},\A)\cong \frac{\textstyle  \A^\s(r\oplus 1)}{\textstyle \A_\T(r\oplus 1)}=\frac{\textstyle  \A^\s(1)}{\textstyle \A_\T(1)}\cong \mathrm{H}^{2}(C_{1,q},\A),
$$
$$
\mathrm{H}_{2r+2}(C_{1,q},\B)\cong \frac{\textstyle  \B^\T(r\oplus 1)}{\textstyle \B_\s(r\oplus 1)}=\frac{\textstyle  \B^\T(1)}{\textstyle \B_\s(1)}\cong \mathrm{H}_{2}(C_{1,q},\B).
$$
\end{proof}

Our results in Theorems \ref{mth2} and \ref{mth3} specify in a simpler form for (co)homology with coefficients in  $C_{m,q}$-modules (see Subsection \ref{cohoM-M}).

\begin{corollary}$(i)$ Let $A$ be any left $C_{m,q}$-module.  Then, $$\mathrm{H}^0(C_{m,q},A)\cong A^\s, $$
and, for any $r\geq 0$,
$$
\begin{array}{lll} \mathrm{H}^{2r+1}(C_{m,q},A)\cong A^\T/A_\s,&&
\mathrm{H}^{2r+2}(C_{m,q},A)\cong A^\s/A_\T,
\end{array}
$$
where $\s,\T:A\to A$ are the homomorphisms given by
$$
\s(a)=1_*a-a, \hspace{0.4cm} \T(a)=m_*\sum\limits_{t=0}^{q-1}t_*a,
$$
$A^\T=\mathrm{Ker}\T$, $A_\T=\mathrm{Im}\T$, $A^\s=\mathrm{Ker}\s$,  and $A_S=\mathrm{Im}\s$.

\vspace{0.2cm}
$(ii)$ Let $B$ be any right $C_{m,q}$-module.  Then, $$\mathrm{H}_0(C_{m,q},B)\cong B/B_\s, $$
and, for any $r\geq 0$,
$$
\begin{array}{lll} \mathrm{H}_{2r+1}(C_{m,q},B)\cong B^\s/B_\T,&&
\mathrm{H}^{2r+2}(C_{m,q},B)\cong B^\T/B_\s,
\end{array}
$$
where $\s,\T:B\to B$ are the homomorphisms given by
$$
\s(b)=b-1^*b, \hspace{0.4cm} \T(b)=m^*\sum\limits_{t=0}^{q-1}t^*b,
$$
$B^\T=\mathrm{Ker}\T$, $B_\T=\mathrm{Im}\T$, $B^\s=\mathrm{Ker}\s$,  and $B_S=\mathrm{Im}\s$.
\end{corollary}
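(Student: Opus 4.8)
The plan is to derive the Corollary directly from Theorems \ref{mth2} and \ref{mth3} by specializing the general formulas to coefficients coming from an ordinary $C_{m,q}$-module, using the explicit description of the embedding recalled in Subsection \ref{cohoM-M}. So the first step is to recall that, for a left $C_{m,q}$-module $A$, the associated $\HH C_{m,q}$-module satisfies $A(x)=A$ for every $x$, with $x_*a=x\oplus a \cdot$ (the module action) and $x^*a=a$. In particular all the groups $\A(r\cdot m)$, $\A(r\cdot m\oplus 1)$, etc., are literally $A$, so the periodicity collapses completely and only the maps $\s$ and $\T$ need to be identified.

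Next I would compute $\s$ on this $\HH C_{m,q}$-module. By Definition \ref{defst}, $\s(a)=1_*a-1^*a$, and since $1^*$ is the identity and $1_*$ is multiplication by $1$ (in the monoid-action sense), this is exactly $\s(a)=1_*a-a$, matching the statement. Then I would compute $\T$ on $\A(x)$ for $x\geq 1$: starting from
$$\T(a)=\sum_{t=0}^{m+q-1}t^*(m+q-t-1)_*a-\sum_{s=0}^{m-1}s^*(m-s-1)_*a,$$
every $t^*$ and $s^*$ is the identity, so this becomes $\sum_{t=0}^{m+q-1}(m+q-t-1)_*a-\sum_{s=0}^{m-1}(m-s-1)_*a$. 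Reindexing the first sum by $j=m+q-t-1$ (so $j$ runs over $0,\dots,m+q-1$) and the second by $j=m-s-1$ (so $j$ runs over $0,\dots,m-1$), the terms with $0\le j\le m-1$ cancel in pairs, leaving $\sum_{j=m}^{m+q-1}j_*a$. Since $j_*a=j\cdot a$ and, for $m\le j<m+q$, one has $j = m\oplus(j-m)$ so $j_*=m_*(j-m)_*$, this equals $m_*\sum_{t=0}^{q-1}t_*a$, which is the asserted formula for $\T$. I should also remark that this last expression no longer depends on $x$, so the apparent dependence of $\A_\T$, $\A^\T$ on the argument $x\cdot m\oplus 1$ disappears and all the quotients in \eqref{cal1cy} reduce to the stated $A^\T/A_\s$ and $A^\s/A_\T$; likewise $\A^\s(0)=\mathrm{Ker}\,\s = A^\s$.

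The argument for part $(ii)$ is entirely parallel, using the embedding of right $C_{m,q}$-modules: $B(x)=B$ with $x^*b=bx$ (the action) and $x_*b=b$. Then from the homology version of $\s$, namely $\s(b)=1_*b-1^*b$, one gets $\s(b)=b-1^*b$, and from the homology trace map $\T(b)=\sum_{t=0}^{m+q-1}t^*(m+q-t-1)_*b-\sum_{t=0}^{m-1}t^*(m-t-1)_*b$, the same reindexing-and-cancellation shows $\T(b)=\sum_{j=m}^{m+q-1}j^*b=m^*\sum_{t=0}^{q-1}t^*b$. Plugging these into Theorem \ref{mth3} and observing again that the expressions are independent of the argument gives the claimed formulas.

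I do not expect a genuine obstacle here, since the statement is a specialization rather than a new theorem; the only thing requiring care is the bookkeeping in the reindexing of the two sums defining $\T$ — getting the cancellation of the $0\le j\le m-1$ range right and correctly rewriting $j_*$ (resp. $j^*$) for $m\le j<m+q$ as $m_*(j-m)_*$ (resp. $m^*(j-m)^*$) using the monoid relation $j=m\oplus(j-m)$ together with $x_*y_*=(x+y)_*$. One should double-check the edge case $m=0$ (where the second sum is empty and the relation $j_*=m_*(j-m)_*$ is trivial), and note that the convergence of the two families of subgroups $\A_\T(r\cdot m\oplus 1)$, $\A^\s((r+1)\cdot m)$ etc. to the single pair $A_\T$, $A^\s$ is precisely what makes the (co)homology $2$-periodic in this case, recovering the classical Eilenberg computation when $m=0$.
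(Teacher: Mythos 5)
Your proposal is correct and is exactly the argument the paper intends: the corollary is stated as an immediate specialization of Theorems \ref{mth2} and \ref{mth3} via the embedding of Subsection \ref{cohoM-M} (the paper leaves the verification implicit), and your identification of $\s$ and the reindexing/cancellation computation showing $\T(a)=\sum_{j=m}^{m+q-1}j_*a=m_*\sum_{t=0}^{q-1}t_*a$ (and its right-module analogue) is the correct way to fill it in. The only blemish is the garbled phrase ``$x_*a=x\oplus a\cdot$'' for the module action, which should read $x_*a=xa$; the mathematics is otherwise sound.
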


The isomorphism $\mathrm{H}^{2}(C_{m,q},A)\cong A^\s/A_\T$ is already known, see \cite[Proposition 4.1]{grillet} for a recent proof. As an immediate consequence of the above corollary, we see that the Eilenberg-Mac Lane (co)homology of any $C_{m,q}$ is periodic with a period of 2, that is,

\begin{corollary} Let $A$ be a left $C_{m,q}$-module and let $B$ be a right $C_{m,q}$-module. For any $r\geq 0$, there are natural isomorphisms
$$ \mathrm{H}^{2r+1}(C_{m,q},A)\cong  \mathrm{H}^{1}(C_{m,q},A)  ,\hspace{0.4cm}
 \mathrm{H}^{2r+2}(C_{m,q}, A)\cong \mathrm{H}^{2}(C_{m,q}, A),
$$
$$ \mathrm{H}_{2r+1}(C_{m,q},B)\cong  \mathrm{H}_{1}(C_{m,q},B)  ,\hspace{0.4cm}
 \mathrm{H}_{2r+2}(C_{m,q}, B)\cong \mathrm{H}_{2}(C_{m,q}, B).
$$
\end{corollary}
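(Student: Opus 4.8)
The plan is to deduce this directly from the explicit computations in the preceding corollary, in which the answers have already been put in a form that does not mention the degree. I would start from part $(i)$: for every $r\geq 0$ it gives isomorphisms $\mathrm{H}^{2r+1}(C_{m,q},A)\cong A^\T/A_\s$ and $\mathrm{H}^{2r+2}(C_{m,q},A)\cong A^\s/A_\T$, where $\s,\T\colon A\to A$ are the fixed homomorphisms $\s(a)=1_*a-a$ and $\T(a)=m_*\sum_{t=0}^{q-1}t_*a$, which depend only on the $C_{m,q}$-module $A$ and not on $r$. Taking $r=0$ identifies the common value $A^\T/A_\s$ with $\mathrm{H}^{1}(C_{m,q},A)$ and the common value $A^\s/A_\T$ with $\mathrm{H}^{2}(C_{m,q},A)$; composing with the isomorphisms for general $r$ then yields $\mathrm{H}^{2r+1}(C_{m,q},A)\cong\mathrm{H}^{1}(C_{m,q},A)$ and $\mathrm{H}^{2r+2}(C_{m,q},A)\cong\mathrm{H}^{2}(C_{m,q},A)$. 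The homology assertions follow in exactly the same way from part $(ii)$, using the right-module versions $\s(b)=b-1^*b$ and $\T(b)=m^*\sum_{t=0}^{q-1}t^*b$.

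To justify the word \emph{natural} in the statement, I would trace through the construction: all the groups $\mathrm{H}^n(C_{m,q},A)$ are computed from the \emph{single} free resolution $\F_\bullet\to\Z$ of Theorem \ref{mtheo1}, which is independent of $A$, and the successive identifications $\mathrm{Hom}_{\HH C_{m,q}}(\F_{2r},A)\cong A(r\cdot m)$, $\mathrm{Hom}_{\HH C_{m,q}}(\F_{2r+1},A)\cong A(r\cdot m\oplus 1)$ come from the adjunction isomorphism of Proposition \ref{adfu}, which is natural in $A$; likewise the homology identifications come from the natural isomorphism of Proposition \ref{pwf}. Hence the comparison isomorphisms above are morphisms of functors in $A$ (respectively in $B$).

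I do not anticipate a real obstacle here — the statement is a formal consequence of the preceding corollary once it is in hand. The only point that deserves a word of care is the observation that the subquotients $A^\T/A_\s$ and $A^\s/A_\T$ (and their right-module analogues $B^\s/B_\T$ and $B^\T/B_\s$) are genuinely independent of the chosen degree, which is immediate from the closed-form expressions for $\s$ and $\T$ recalled above.
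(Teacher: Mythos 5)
Your proposal is correct and is exactly the paper's argument: the statement is presented there as an immediate consequence of the preceding corollary, since the subquotients $A^\T/A_\s$, $A^\s/A_\T$ (and their right-module analogues) do not depend on $r$, so comparing with the case $r=0$ gives the periodicity. Your additional remarks on naturality, via the fixed resolution and the natural isomorphisms of Propositions \ref{adfu} and \ref{pwf}, are a sound elaboration of a point the paper leaves implicit.
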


If $A$ is any abelian group, regarded as a left or right $C_{m,q}$-module on which the monoid acts trivially, then, for  any $a\in A$,  $\s(a)=a-a=0$, that is,  $\s=0:A\to A$ is the zero homomorphism, while $\T(a)=\sum_{i=0}^{q-1}a=q\,a$, that is, the trace map
$\T=q:A\to A$ is  multiplication by $q$. Therefore,
$$\mathrm{H}^{0}(C_{m,q},A)\cong A\cong\mathrm{H}_{0}(C_{m,q},A),$$
and, for all $r\geq 0$,
$$
\begin{array}{clclc}
\mathrm{H}^{2r+1}(C_{m,q},A)&\cong&\mathrm{Ker}(q:A\to A)&\cong& \mathrm{H}_{2r+2}(C_{m,q},A),\\[5pt]
\mathrm{H}^{2r+2}(C_{m,q},A)&\cong&\mathrm{Coker}(q:A\to A)&\cong& \mathrm{H}_{2r+1}(C_{m,q},A).
\end{array}
$$
Observe that the (co)homology groups of the finite cyclic monoid $C_{m,q}$ with coefficients in the abelian group do not depend on the index $m$. Indeed,  they  agree with those of the cyclic group $C_q$. Actually, this fact is not surprising because it is well-known that the (co)homology groups of any commutative monoid with trivial coefficients coincide with those of its group reflection (i.e., its image  under the left adjoint of the forgetful functor from groups to monoids) \cite[Proposition 4.4]{fied}, and the group reflection of $C_{m,q}$ is just $C_q$.

To conclude, we  particularize to the case when the coefficients are {\em symmetric} $\HH C_{m,q} $-modules. Recall that, if $M$ is any {\em commutative} monoid, a left  $\HH M$-module $\A$  is called symmetric if, for any $x,y\in M$, $y_*=y^*:\A(x)\to \A(x+y)$. Symmetric $\HH M$-modules are equivalent to abelian group objects in the comma category of commutative monoids over $M$ \cite[Chap. XXII, §2]{grillet2}, and therefore they are the coefficients for the cotriple cohomology theory \cite{Barr-Beck} of commutative monoids, carefully studied by Grillet to whose book \cite{grillet2} we refer interested readers. See also the recent approach to the (co)homology of commutative monoids by Kurdiani and Pirashvili in \cite{pirash}. Symmetric right $\HH M$-modules are defined similarly, and Theorems \ref{mth2} and \ref{mth3} give the following.

\begin{corollary}
$(i)$ Let $\A$ be any symmetric left $C_{m,q}$-module.  Then, $$\mathrm{H}^0(C_{m,q},\A)\cong \A(0), $$
and, for any $r\geq 0$,
$$
\begin{array}{lll} \mathrm{H}^{2r+1}(C_{m,q},\A)\cong \A^\T(r\cdot m\oplus 1),&&
\mathrm{H}^{2r+2}(C_{m,q},A)\cong \frac{\textstyle \A((r+1)\cdot m)}{\textstyle \A_\T(r\cdot m\oplus 1)},
\end{array}
$$
where, for any $x\in C_{m,q}$, $x\geq 1$,   $\T:\A(x)\to \A(m\oplus (x-1))$ is the trace map given by
$$
\T(a)= (m+q) \big((m+q-1)_*a\big)- m\big((m-1)_*a\big),
$$
$A^\T(x)=\mathrm{Ker}\T$, and $A_\T(x)=\mathrm{Im}\T$.

\vspace{0.2cm}
$(ii)$ Let $\B$ be any symmetric right $C_{m,q}$-module.  Then, $$\mathrm{H}_0(C_{m,q},\B)\cong \B(0), $$
and, for any $r\geq 0$,
$$
\begin{array}{lll} \mathrm{H}_{2r+1}(C_{m,q},\B)\cong \frac{\textstyle \B(r\cdot m)}{\textstyle \B_\T(r\cdot m\oplus 1)},&&
\mathrm{H}^{2r+2}(C_{m,q},\B)\cong \B^\T(r\cdot m\oplus 1),
\end{array}
$$
where, for any $x\in C_{m,q}$, $x\geq 1$,   $\T:\B(m\oplus (x-1))\to \B(x)$ is the trace map given by
$$
\T(b)= (m+q) \big((m+q-1)_*b\big)- m\big((m-1)_*b\big),
$$
$B^\T(x)=\mathrm{Ker}\T$, and $B_\T(x)=\mathrm{Im}\T$.
\end{corollary}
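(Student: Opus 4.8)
The plan is to obtain the corollary as a direct specialization of Theorems \ref{mth2} and \ref{mth3}. The crucial observation is that if $\A$ is a symmetric left $\HH C_{m,q}$-module, so that $y_*=y^*\colon\A(x)\to\A(x\oplus y)$ for all $x,y\in C_{m,q}$, then the homomorphism $\s\colon\A(x)\to\A(x\oplus 1)$ of Definition \ref{defst}, namely $\s(a)=1_*a-1^*a$, is identically zero. Hence $\A^\s(x)=\A(x)$ and $\A_\s(x)=0$ for every $x\in C_{m,q}$. Substituting this into Theorem \ref{mth2} at once gives $\mathrm{H}^0(C_{m,q},\A)\cong\A^\s(0)=\A(0)$, then $\mathrm{H}^{2r+1}(C_{m,q},\A)\cong\A^\T(r\cdot m\oplus1)/\A_\s(r\cdot m)=\A^\T(r\cdot m\oplus1)$, and $\mathrm{H}^{2r+2}(C_{m,q},\A)\cong\A^\s((r+1)\cdot m)/\A_\T(r\cdot m\oplus1)=\A((r+1)\cdot m)/\A_\T(r\cdot m\oplus1)$, which are exactly the stated formulas.

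What then remains is just to rewrite the trace map of Definition \ref{defst} in the announced closed form. Using symmetry to replace each $t^*$ by $t_*$, every summand $t^*(m+q-t-1)_*a$ of the first sum becomes $t_*(m+q-t-1)_*a=(t\oplus(m+q-t-1))_*a$ by the left-module composition rule $x_*y_*=(x\oplus y)_*$; since $t+(m+q-t-1)=m+q-1<m+q$ we have $t\oplus(m+q-t-1)=m+q-1$, so each of the $m+q$ terms equals $(m+q-1)_*a$. Likewise each of the $m$ terms $s^*(m-s-1)_*a$ of the second sum equals $(m-1)_*a$, because $s+(m-s-1)=m-1<m+q$. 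Thus $\T(a)=(m+q)\big((m+q-1)_*a\big)-m\big((m-1)_*a\big)$, as claimed, with $\A^\T(x)=\mathrm{Ker}\,\T$ and $\A_\T(x)=\mathrm{Im}\,\T$.

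Part (ii), for a symmetric right $\HH C_{m,q}$-module $\B$, is entirely parallel: the right-module homomorphism $\s(b)=1_*b-1^*b$ again vanishes because $1_*=1^*$, so $\B^\s(x)=\B(x\oplus 1)$ and $\B_\s(x)=0$; feeding this into Theorem \ref{mth3} yields the displayed homology groups, and the same arithmetic (now with the right-module rule $x_*y_*=(x\oplus y)_*$) collapses the right trace map to $\T(b)=(m+q)\big((m+q-1)_*b\big)-m\big((m-1)_*b\big)$. No real obstacle is anticipated, since the whole argument is a routine specialization of the two main theorems; the only point deserving a moment of attention is checking that the index identities $t\oplus(m+q-t-1)=m+q-1$ and $s\oplus(m-s-1)=m-1$ hold for every value in the summation ranges — which they do, because both $m+q-1$ and $m-1$ lie in $\{0,\dots,m+q-1\}$, where $\wp$ acts as the identity.
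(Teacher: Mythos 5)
Your proposal is correct and is precisely the argument the paper intends: the corollary carries no proof beyond the remark that Theorems \ref{mth2} and \ref{mth3} ``give'' it, and your specialization ($\s=0$ on a symmetric module, plus collapsing the trace sums via $t^{*}=t_{*}$ and the identities $t\oplus(m+q-t-1)=m+q-1$, $s\oplus(m-s-1)=m-1$, which hold because $\wp$ is the identity below $m+q$) is exactly that routine deduction. One caveat: your claim that substituting into Theorem \ref{mth3} ``yields the displayed homology groups'' is not literally true for $\mathrm{H}_{2r+1}$ --- since $\B^{\s}(r\cdot m)$ is by definition a subgroup of $\B(r\cdot m\oplus 1)$, setting $\s=0$ gives numerator $\B(r\cdot m\oplus 1)$ rather than the $\B(r\cdot m)$ printed in the corollary (and indeed $\B_{\T}(r\cdot m\oplus 1)\subseteq\B(r\cdot m\oplus 1)$, so the printed quotient is not even well formed); this is a typo in the paper's statement, your derivation produces the corrected formula, and you should have flagged the discrepancy instead of asserting exact agreement.
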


\end{document}